\newcommand{\mb}{\mathbb}
\newcommand{\R}{\mb R}
\newcommand{\C}{\mb C}
\newcommand{\Pj}{\mb P}
\newcommand{\Z}{\mb Z}
\newcommand{\Q}{\mb Q}
\newcommand{\N}{\mb N}
\newcommand{\K}{\mb K}
\newcommand{\mc}{\mathcal}
\newcommand{\F}{\mc F}
\DeclareMathOperator{\sing}{sing}
\DeclareMathOperator{\I}{(I)}
\DeclareMathOperator{\II}{(II)}
\DeclareMathOperator{\III}{(III)}
\newtheorem{theorem}{Theorem} 
\newtheorem{proposition}{Proposition} 
\newtheorem{remark}{Remark} 
\newtheorem{lemma}{Lemma} 
\newtheorem{example}{Example}
\newtheorem{problem}{Problem}
\newtheorem{main}{Theorem}
\begin{document}

\title[Planar vector fields without invariant algebraic curves]{Planar vector fields without invariant algebraic curves}

\author[Gabriel Fazoli and Paulo Santana]{Gabriel Fazoli$^1$ and Paulo Santana$^2$}

\address{$^1$IRMAR-Université de Rennes, Rennes, France.}
\email{gabrielfazoli@gmail.com}

\address{$^2$IBILCE-UNESP, S. J. Rio Preto, SP, Brazil.}
\email{paulo.santana@unesp.br}

\subjclass[2020]{34C05, 34C07, 37C86}

\keywords{Invariant algebraic curves; Hilbert 16th problem; foliation theory}

\begin{abstract}
    In this work we revisit and extend the method introduced by Lins Neto, Sad and Scárdua for detecting the non-existence of invariant algebraic curves other than some prescribed invariant nodal curve. We prove that, under the existence of a suitable example, the space of polynomial vector fields whose elements have the prescribed curve as their unique invariant algebraic curve is residual and of full measure. We apply this framework to Kolmogorov vector fields, showing that generically the coordinate axes are the unique invariant algebraic curves. Finally, we also refine existing characterizations related to Hilbert’s 16th problem, showing that if there exists a bound for the number of limit cycles of a vector field of degree n, then it can be attained by a vector field without algebraic limit cycles.
\end{abstract}

\maketitle

\section{Introduction}

Let $\mathbb{K}\in\{\mathbb{R},\mathbb{C}\}$ and consider the system of differential equations $\mathcal{X}$ on $\mathbb{K}^2$ 
\begin{equation}\label{01}
    \dot x=P(x,y), \quad \dot y=Q(x,y),
\end{equation}
with $P$, $Q\in\mathbb{K}[x,y]$, and the dot denoting the derivative in relation to the independent time variable $t\in\mathbb{K}$. For simplicity, we denote~\eqref{01} as $\mathcal{X}=(P,Q)$ and we may also refer to it as \emph{vector field}. The \emph{degree} of $\mathcal{X}$ is defined as $n=\max\{\deg P,\deg Q\}$.

Let $\gamma$ be one of the integral curves of $\mathcal{X}$. We say that $\gamma$ is \emph{algebraic} if it lies in an algebraic set, meaning that there exists a polynomial  $F\in\mathbb{C}[x,y]$, $\deg F>0$, such that the image of $\gamma$ is contained in $F^{-1}(\{0\})\cap\mathbb{K}^2$. When $F$ is taken to be irreducible, one can find a polynomial $K\in\mathbb{C}[x,y]$, called \emph{cofactor}, for which
\begin{equation}\label{02}
    P\frac{\partial F}{\partial x}+Q\frac{\partial F}{\partial y}=K\cdot F.
\end{equation}
Conversely, given a polynomial $F\in\mathbb{C}[x,y]$ (not necessarily irreducible), $\deg F>0$, we say that it is an \emph{invariant algebraic curve} of $\mathcal{X}$ whenever condition~\eqref{02} holds for some $K\in\mathbb{C}[x,y]$. The terminology \emph{invariant} reflects the fact that $F^{-1}(\{0\})\cap\mathbb{K}^2$ is invariant by $\mathcal{X}$. 

The study of invariant algebraic curves arises in several classical problems in the theory of dynamical systems, such as Darboux's problem (see~\cite{Darboux} and ~\cite{JouBook}*{Chapter 2}) and Poincaré's problem (see~\cites{Poincare, CLN, Carnicer}). These problems illustrate how the existence of algebraic curves imposes a certain rigidity on the geometry of dynamical systems. Therefore, it is natural to expect that a generic vector field admits no such curves. To make this intuition precise and specify the sense of ``generic'' involved, in this work we investigate the following problem.

\begin{problem}\label{Q1}
    Study the space of planar vector fields \emph{without} invariant algebraic curves.
\end{problem}

Besides this question seems to be first addressed for planar system of differential equations at~\cite{Landis}, the first remarkable result in this sense was provided by Jouanolou~\cite{JouBook}*{Chapter~$4$} in the context of \emph{holomorphic foliations}: it was proved that a \emph{generic} foliation on $\C\Pj^2$ does not have any invariant algebraic curve, where here \emph{generic} means that the set of foliations without invariant algebraic curves is an enumerable intersection of Zariski open subsets. Later Lins Neto~\cite{ALN1988}*{Theorem~$B$} proved that this space indeed contains an open and dense subset (with the natural euclidean topology). An alternative prove of this fact was provided at~\cite{Carnicer}.

Back to planar vector fields, several of the techniques developed by the aforementioned works still apply in this context. For instance, at~\cite{BerLun}*{Theorem~$5$} and~\cite{ALN1998}*{Theorem~$1.1$}, it was proved that in the space of complex planar polynomial vector fields of degree $n$, there is an open and dense set without invariant algebraic curves, and at~\cite{CouMen} a similar result was provided for planar systems with rational coefficients.

Regarding the real case, as far as we know, the only reference that resembles explicitly Problem~\ref{Q1} is~\cite{ALN1988}*{Theorem~$B'$}, which again deal with foliations on the real projective plane. For this reason, the first objective of this paper is to revisit the aforesaid works to properly address Problem~\ref{Q1} for the case of real planar vector fields. Then we apply our results on the celebrated Hilbert's 16th problem, providing an enhanced characterization of the vector fields realizing it (see Theorem~\ref{M2} below).

Rather than simply addressing Problem~\ref{Q1}, in this work we consider a more general question, showing that same techniques presented at \cites{ALN1988,ALN1998} can be generalized to a broader context. 

\begin{problem}\label{Q1'}
    Let $F$ be an algebraic curve on $\K^2$. Study the space of planar vector fields with no invariant algebraic curves other than $F$. 
\end{problem}

We now recall some notions used in the statement of our first main result. Given $n\in\mathbb{N}$, let $\mathfrak{X}_n(\mathbb{K}^2)$ be the set of polynomial systems~\eqref{01} of degree $n$. Given a topological space, a subset is \emph{residual} if it contains the intersection of countably many open and dense subsets. All topological spaces appearing in this work are \emph{Baire spaces} and thus every residual set will be dense~\cite{Munkres}*{Chapter~$8$}. Let $A\subset\mathfrak{X}_n(\mathbb{K}^2)$ be a measurable set. We say that it has \emph{full Lebesgue measure} if its complement has zero Lebesgue measure.

Given $F\in\mathbb{C}[x,y]$ a (not necessarily irreducible) polynomial, let $\mathfrak{X}_n^F(\mathbb{K}^2)$ be the family of vector fields of degree $n$ having $F$ as an invariant algebraic curve. We say that $F$ is \emph{nodal} if all its singularities are of normal crossing type, i.e. at each singularity of $F$ there are exactly two smooth branches of $F=0$ intersecting transversely. Moreover, we say that $F$ is nodal \emph{taking the line at infinity into account} if $F\cdot L_\infty$ is a nodal curve in $\mathbb{C}\mathbb{P}^2$, where $L_\infty=\mathbb{C}\mathbb{P}^2\setminus\mathbb{C}^2$. Let $\Upsilon_n^{d,F}(\mathbb{K})\subset\mathfrak{X}_n^F(\mathbb{K}^2)$ (resp. $\Upsilon_n^{\infty,F}(\mathbb{K})\subset\mathfrak{X}_n^F(\mathbb{K}^2)$) be the family of vector fields having no invariant algebraic curve of degree at most $d$ (resp. of any degree), other than $F$. In the case $F=1$, we simply omit $F$ from the notation. 

The strategy presented in \cite{ALN1988} to study foliations without invariant algebraic curves, and which we intend to apply here, is based on the \emph{Camacho-Sad Theorem} (see Equation~\ref{Eq: Camacho-Sad of a curve}). The precise definitions will be presented later in this text, but for the purpose of this introduction we only mention that we use Camacho-Sad Theorem to define \emph{Camacho-Sad obstructions} to the existence of an invariant curve other than a fixed one. These consist of a finite collection of numerical equalities for which a foliation must satisfy at least one in order to (possibly) possess an invariant curve other than a fixed curve $F$. 

\begin{main}\label{M3}
    Let $F\in\mathbb{C}[x,y]$ be nodal curve taking the line at infinity into account, and let $n\in \N$. Suppose there exists $\mathcal{X}_0\in\mathfrak{X}_n^F(\mathbb{\C}^2)$ with only Poincaré singularities, such that $\mathcal{X}_0$ does not satisfy any \emph{Camacho-Sad obstructions}. Then:
    \begin{enumerate}[label=$(\alph*)$]
        \item\label{a} $\Upsilon_n^{\infty,F}(\mathbb{C})$ contain an open and dense set.
    \end{enumerate}
    Moreover, suppose that there exists $\mathcal{X}_1\in\mathfrak{X}_n^F(\mathbb{\R}^2)$ with only simple singularities, such that $\mathcal{X}_1$ does not satisfy any \emph{Camacho-Sad obstructions}. Then:
    \begin{enumerate}[label=$(\alph*)$,resume]
        \item\label{b} $\Upsilon_n^{\infty,F}(\mathbb{R})$ is residual; and
        \item\label{c} for each $d\geqslant 1$, $\Upsilon_n^{d,F}(\mathbb{R})$ contain an open and dense set.
    \end{enumerate}
    Moreover, they all have full Lebesgue measure.
\end{main}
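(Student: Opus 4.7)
The plan is to implement the Camacho-Sad obstruction framework already advertised in the introduction. For each integer $d\geqslant 1$, the Camacho-Sad theorem should provide a finite collection of polynomial equalities $\mathcal{O}_{d,1}=0,\dots,\mathcal{O}_{d,N_d}=0$ in the coefficients of $\mathcal{X}\in\mathfrak{X}_n^F(\K^2)$, with the property that any such $\mathcal{X}$ admitting an invariant algebraic curve of degree at most $d$ distinct from $F$ must satisfy at least one of them. Let $U_d^{\K}\subset\mathfrak{X}_n^F(\K^2)$ denote the set of vector fields satisfying none of these. Then $U_d^{\K}$ is the complement of the vanishing locus of $\prod_i\mathcal{O}_{d,i}$; as soon as a single vector field avoids all obstructions, $U_d^{\K}$ is the complement of a proper algebraic subvariety, hence open, dense, and of full Lebesgue measure. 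The hypothesis on $\mathcal{X}_0$ (resp.~$\mathcal{X}_1$) is exactly what guarantees that $U_d^{\C}$ (resp.~$U_d^{\R}$) is nonempty for every $d$.

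For part~\ref{a}, I would combine $U_d^{\C}$ with a Jouanolou-type uniform bound on the possible degrees of invariant algebraic curves in the complex setting. The Poincaré character of the singularities of $\mathcal{X}_0$ is an open condition and controls the Camacho-Sad indices: along a hypothetical new invariant curve of very high degree, the sum of those indices would have to attain rational values forbidden by the non-resonance encoded in the Poincaré hypothesis. This reduces the problem to a single threshold $d_0=d_0(n,F)$, so $U_{d_0}^{\C}\subset\Upsilon_n^{\infty,F}(\C)$, and \ref{a} follows from the general properties of $U_d^{\K}$ above.

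Parts~\ref{b} and~\ref{c} follow the same template over $\R$, but now without any uniform degree bound available. Part~\ref{c} is immediate: $U_d^{\R}\subset\Upsilon_n^{d,F}(\R)$ is open, dense and of full measure for each $d$. For part~\ref{b}, since a real vector field may carry complex invariant curves of arbitrarily high degree, one only has the inclusion $\Upsilon_n^{\infty,F}(\R)\supset\bigcap_{d\geqslant 1}U_d^{\R}$. The right-hand side is a countable intersection of open dense subsets of a Baire space, hence residual, and a countable intersection of full-measure sets, hence of full measure, which is precisely the claim.

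The main obstacle, and the step where essentially all the work lies, is the careful construction of the Camacho-Sad obstructions as a genuinely finite list of polynomial equations on $\mathfrak{X}_n^F(\K^2)$. Concretely, one must parametrize the possible distributions of singularities of a putative new invariant curve $G=0$ along $F$ and along $L_\infty$ (where nodality of $F\cdot L_\infty$ ensures only two local branches meet at each node, giving clean Camacho-Sad contributions), and then show that each such distribution produces only finitely many numerical equalities among the Camacho-Sad indices of $\mathcal{X}$ at the prescribed singular set. The algebraicity of these equalities in the coefficients of $\mathcal{X}$ follows from the algebraic dependence of the indices on $\mathcal{X}$ near a fixed singular set, while their finiteness is forced by the nodality assumption and by bounding the degree of $G$. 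Formalizing this finiteness–algebraicity package is precisely the core of the Lins Neto–Sad–Scárdua methodology that the present theorem is extending to this more general setting.
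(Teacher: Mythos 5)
Your outline has the right flavour but rests on a claim that the paper never makes and that does not hold as stated: that the Camacho--Sad obstructions form a finite list of \emph{globally polynomial} equalities in the coefficients of $\mathcal{X}$, so that their common non-vanishing locus is the complement of an algebraic subvariety and a single point outside it forces density. The quantities entering the obstructions are the eigenvalue ratios $\sigma_A(\mathcal{F})=\sum_{(j,k)\in A}\lambda_{j,2}/\lambda_{j,1}$ attached to an \emph{enumeration} of the $n^2+n+1$ singular points; these are only locally well-defined analytic functions on the open set of Poincar\'e (resp.\ simple) foliations, with monodromy permuting the singularities, not polynomials on $\mathfrak{X}_n^F$. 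The paper therefore defines the bad locus $X$ only locally, as $\bigcup_{A}\sigma_A^{-1}(k(A)^2-\beta(A))$ inside $\mathcal{P}_n^F(\C)$, checks that the union is independent of the enumeration so that $X$ is a well-defined \emph{analytic} subset, and then uses that $\mathcal{P}_n^F(\C)$ is open, dense and \emph{connected} (Lemma~\ref{L: main lemma}) together with $\mathcal{F}_0\notin X$ to conclude $X$ has empty interior. Without either global polynomiality or this connectedness-plus-identity-principle step, ``one good example implies the obstruction locus is nowhere dense'' is unjustified. Relatedly, your reduction of part~\ref{a} to a uniform degree bound $d_0(n,F)$ is both unproved and unnecessary: the configurations $A\in\mathcal{A}^F_0$ are subsets of a fixed finite set of $2(n^2+n+1)$ separatrix slots, so the finitely many obstructions already exclude invariant curves of \emph{all} degrees at once (the degree of a putative solution is recovered as $k(A)$ from the configuration).

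The second gap is in the real case. Your claim that each $U_d^{\R}$ is open and dense would, if correct, immediately give~\ref{c}; but the Camacho--Sad machinery only applies to foliations whose singularities are simple ($\lambda\notin\Q_{>0}$), and over $\R$ this is a residual, \emph{not} open, condition. This is exactly why the theorem asserts only residuality in~\ref{b} and why the paper must prove~\ref{c} by an entirely different route: the set $A_{n,d}$ of vector fields with an extra invariant curve of degree $\leqslant d$ is exhibited as the projection of the incidence variety $\{P\,\partial_xG+Q\,\partial_yG=KG,\ G\neq F\}$, hence semi-algebraic by Tarski--Seidenberg; its dimension is then bounded using the density already obtained in~\ref{b}, so $\overline{A_{n,d}}$ is closed with empty interior and of measure zero. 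Your proposal supplies no substitute for this step, and the ``finiteness--algebraicity package'' you defer to at the end is precisely where the argument would break down if pursued as written.
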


Throughout, we will present more precise statements for Theorem~\ref{M3} (see Theorems~\ref{T: lines invariant, complex case} and ~\ref{T6}). Notice that, by restricting the discussion above to the case $F=1$, we return to the discussion of Problem~\ref{Q1}. In this situation, the existence of the \emph{Jouanolou's foliations} implies the following characterizations.

\begin{main}\label{M1}
    For each $n\geqslant2$, the following statements hold.
    \begin{enumerate}[label=$(\alph*)$]
        \item $\Upsilon_n^\infty(\mathbb{C})$ contain an open and dense set;
        \item $\Upsilon_n^\infty(\mathbb{R})$ is residual; and
        \item for each $d\geqslant 1$, $\Upsilon_n^d(\mathbb{R})$ contain an open and dense set.
    \end{enumerate}
    Moreover, they all have full Lebesgue measure.
\end{main}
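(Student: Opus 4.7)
The plan is to derive Theorem \ref{M1} as the special case $F = 1$ of Theorem \ref{M3}, by exhibiting a single concrete vector field that satisfies the hypotheses of the latter. The natural candidate is the Jouanolou foliation $\mathcal{J}_n$ of degree $n$, which is given in affine coordinates by a polynomial vector field with integer coefficients and is the classical example known to admit no invariant algebraic curve in $\mathbb{C}\mathbb{P}^2$. Because the defining coefficients are real, the same $\mathcal{J}_n$ lies in $\mathfrak{X}_n(\mathbb{C}^2)$ and in $\mathfrak{X}_n(\mathbb{R}^2)$, so a single example can play the role of both $\mathcal{X}_0$ and $\mathcal{X}_1$ in Theorem \ref{M3}.

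Having fixed $\mathcal{J}_n$, my next step would be to verify, by direct computation, the hypotheses of Theorem \ref{M3} with $F = 1$. This requires enumerating the finitely many singularities of $\mathcal{J}_n$ in $\mathbb{C}^2$ and on the line at infinity $L_\infty$, computing the eigenvalues of the linear part at each, and then checking: (i) for part (a), that every ratio of eigenvalues lies outside the closed negative real ray, so that each singularity is of Poincaré type; (ii) for parts (b) and (c), that every real singularity is simple, i.e.\ has nondegenerate linear part; and (iii) in both cases, that $\mathcal{J}_n$ satisfies none of the Camacho-Sad obstructions attached to $F = 1$. Once this is in place, statements (a), (b), (c) of Theorem \ref{M1}, together with the full Lebesgue measure conclusion, follow immediately from the corresponding statements in Theorem \ref{M3}.

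The hardest step will be item (iii), the verification that no Camacho-Sad obstruction is satisfied. Jouanolou's theorem gives only the non-existence of invariant algebraic curves, whereas the Camacho-Sad obstructions are \emph{necessary} conditions for the existence of such a curve; their non-satisfaction is a strictly stronger statement and does not follow formally from Jouanolou's result. The strategy I would use is to exploit the explicit arithmetic of the eigenvalue ratios of $\mathcal{J}_n$ — they are roots of simple polynomial equations in $n$ — and translate each Camacho-Sad obstruction into a polynomial identity among these roots. Each such identity can then be ruled out either by a direct degree bound or by a Galois/irreducibility argument, giving the required refinement. Once (iii) is established, invoking Theorem \ref{M3} completes the proof.
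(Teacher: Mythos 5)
Your overall strategy --- specialize Theorem~\ref{M3} to $F=1$ by exhibiting one explicit example violating all Camacho--Sad obstructions --- is exactly the paper's strategy. But the specific example you choose, the Jouanolou foliation $\mathcal{J}_n$, cannot play this role, and this is a genuine gap. For $F=1$ the hypotheses of Theorem~\ref{M3} ask for $\mathcal{X}_0\in\mathfrak{X}_n(\mathbb{C}^2)$, a planar vector field of degree $n$; under the correspondence $\Psi$ its projective extension lies in $\mathfrak{F}_n^{\infty}(\mathbb{C})$, i.e.\ it has the line at infinity invariant, and the Camacho--Sad obstructions one must violate are precisely those attached to the configurations of separatrices \emph{relative to $L_\infty$} (this is what ``nodal taking the line at infinity into account'' encodes: the relevant projective curve is $F\cdot L_\infty=L_\infty$). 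The Jouanolou foliation has \emph{no} invariant algebraic curve whatsoever --- in particular no invariant line --- so it does not belong to $\mathfrak{F}_n^{\infty}(\mathbb{C})$; its restriction to an affine chart is a vector field of degree $n+1$ (Proposition~\ref{Proposition: degree of the planar vector field induced by restriction}), and if one forces it into $\overline{\mathfrak{X}_{n+1}(\mathbb{K}^2)}$ the non-saturated extension turns $L_\infty$ into a curve of singularities, destroying the Poincaré/simple hypotheses. So there is no degree in which $\mathcal{J}_n$ furnishes the required $\mathcal{X}_0$ or $\mathcal{X}_1$. The paper instead obtains the example from Theorem~\ref{T7}, i.e.\ from the explicit foliation constructed in \cite{ALN1998}*{Theorem~4}, which is tailored to have $L_\infty$ invariant, only Poincaré (resp.\ simple) singularities, and to violate every admissible obstruction $\sigma_A=k(A)^2-\beta(A)$. (The sentence in the introduction attributing Theorem~\ref{M1} to ``the existence of the Jouanolou's foliations'' is informal; Jouanolou's example enters the paper only in the proof of Theorem~\ref{T2}, where the ambient space is all of $\mathfrak{F}_n(\mathbb{C})$ rather than $\mathfrak{F}_n^{\infty}(\mathbb{C})$.)

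Two secondary inaccuracies in your verification checklist: with the paper's conventions, ``Poincaré'' means the eigenvalue ratio $\lambda$ avoids $\mathbb{R}_{>0}$ (not the negative ray), ``simple'' means nondegenerate \emph{and} $\lambda\notin\mathbb{Q}_{>0}$ (not merely nondegenerate), and in the real case these conditions must hold for \emph{all} singularities of the complexification, not only the real ones. Your remark that non-satisfaction of the obstructions is strictly stronger than the mere absence of invariant curves, and hence does not follow formally from Jouanolou's theorem, is correct --- but the fix is not a Galois computation on $\mathcal{J}_n$; it is to replace $\mathcal{J}_n$ by an example that actually lives in $\mathfrak{F}_n^{\infty}$.
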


For completeness we included statement~$(a)$ in Theorem~\ref{M1}, although it was already established in~\cite{BerLun}*{Theorem~$5$} and~\cite{ALN1998}*{Theorem~$1.1$}. Statement~$(b)$ follows from~\cite{ALN1998}*{Theorem~$1.1$} in addition with some observations made in~\cite{ALN1988}*{Section~$3.4$}, and statement $(c)$ is essentially the real version of the argument presented by Jouanolou at~\cite{JouBook}*{Chapter~$4$}. However, to our knowledge, they have not been explicitly stated in the literature.

As another application of Theorem~\ref{M3}, we study \emph{Kolmogorov systems}, i.e. Systems~\eqref{01} for which both coordinate axes $x=0$ and $y=0$ are invariant. Such systems extend the classical Lotka–Volterra predator–prey equations, see~\cite{Karl} for details. We prove that, in general, the only invariant algebraic curves of a planar Kolmogorov system are the coordinate axes.

\begin{main}\label{M4}
    For each $n\geqslant2$, the following statements hold.
    \begin{enumerate}[label=$(\alph*)$]
        \item $\Upsilon_n^{\infty,xy}(\mathbb{C})$ contain an open and dense set;
        \item $\Upsilon^{\infty,xy}(\R)$ is residual; and
       \item for each $d\geqslant 1$, $\Upsilon^{d,xy}_n(\R)$ contain an open and dense set.
    \end{enumerate}
    Moreover, they all have full Lebesgue measure.
\end{main}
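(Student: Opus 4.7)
The plan is to derive Theorem~\ref{M4} by applying Theorem~\ref{M3} to the curve $F = xy$. Two verifications are needed. The first is that $xy$ is nodal taking the line at infinity into account: homogenising and multiplying by the equation of $L_\infty$ yields $XYZ = 0$, the union of the three coordinate lines of $\mathbb{C}\mathbb{P}^2$, which meet pairwise transversely at the three coordinate points. Hence the hypothesis on $F$ in Theorem~\ref{M3} is satisfied.

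The essential task is then to exhibit, for each $n \geqslant 2$, a complex $\mathcal{X}_0 \in \mathfrak{X}_n^{xy}(\mathbb{C}^2)$ with only Poincaré singularities, and a real $\mathcal{X}_1 \in \mathfrak{X}_n^{xy}(\mathbb{R}^2)$ with only simple singularities, neither of them satisfying any Camacho--Sad obstruction. Kolmogorov vector fields of degree $n$ form an explicit affine subspace of $\mathfrak{X}_n(\mathbb{K}^2)$, namely those of the form $\dot x = xP(x,y)$, $\dot y = yQ(x,y)$ with $\deg P,\deg Q \leqslant n-1$. Inside this subspace, the condition ``all singularities Poincaré/simple'' is open and of full measure, while each Camacho--Sad obstruction cuts out a proper algebraic condition on the coefficients. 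Granting these witnesses, statements $(a)$, $(b)$, $(c)$ and the full-measure assertion follow immediately from Theorem~\ref{M3}, in the same way that Theorem~\ref{M1} is deduced from it using Jouanolou's foliations.

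The main obstacle will be the explicit construction of $\mathcal{X}_0$ and $\mathcal{X}_1$ for every $n$, since they play the role of Jouanolou's foliations in the present setting. A natural strategy is to consider a Kolmogorov family of the form $\dot x = x(1 + \varphi(x,y))$, $\dot y = y(1 + \psi(x,y))$ with $\varphi$, $\psi$ built from a parsimonious collection of monomials of degree at most $n - 1$, designed so that the singularities on the coordinate axes, at the origin, and at infinity can be catalogued directly and the eigenvalues expressed in closed form. One then checks that these singularities are Poincaré (respectively simple), and that for a judicious choice of parameters the numerical equalities underlying the Camacho--Sad obstructions---indexed by the possible degree $d$ of a hypothetical additional invariant algebraic curve---all fail simultaneously. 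Since each such equality defines a proper algebraic locus in the coefficient space, the countable union of these loci cannot cover the Kolmogorov family; making this avoidance effective by an explicit witness, for each $n$ and in both the real and complex settings, is the technical heart of the argument.
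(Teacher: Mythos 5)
Your overall architecture matches the paper's: Theorem~\ref{M4} is indeed deduced by feeding the nodal curve $XYZ=0$ into the machinery of Theorem~\ref{M3} (via Theorems~\ref{T: lines invariant, complex case} and~\ref{T6}), and the nodality check for $xy$ together with the line at infinity is correct. However, there is a genuine gap: you never actually produce the witnesses $\mathcal{X}_0$ and $\mathcal{X}_1$, and the step you gloss over --- ``each Camacho--Sad obstruction cuts out a \emph{proper} algebraic condition on the coefficients'' --- is precisely the claim that cannot be asserted without a witness. A priori, one of the finitely many equalities $\sigma_A(\mathcal{F})=k(A)^2-\beta(A)$ could hold identically on the entire Kolmogorov subspace $\mathfrak{F}_n(\mathbb{K})^{\mathrm{Kol}}$ (this is exactly what happens for the configuration consisting of the branches of $xyz=0$ itself, which is why those configurations must be excluded from $\mathcal{A}^F$ in the first place). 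Properness of each remaining locus is equivalent to exhibiting one foliation avoiding all of them, so the ``avoidance'' argument is circular until the example is on the table.

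The paper fills this gap with Lemma~\ref{Lemma: Kolmogorov}: an explicit one-parameter family of homogeneous $1$-forms $\Omega_b$ having $xyz=0$ invariant, built around a constant $a_0$ chosen so that $\{1,a_0,1/a_0\}$ is $\mathbb{Z}$-linearly independent. The singularities are catalogued into corner points, axis points, and off-axis points; the eigenvalue ratios at $b=0$ are all expressed in terms of $a_0$; and the hypothetical identity $\sigma_A(\mathcal{F}_0)=k^2-\beta$ is expanded in the basis $\{1,a_0,1/a_0\}$, yielding the system $k^2-\beta=-k(n-1)$, $\alpha_1+\beta=k(n-1)$, $\alpha_2+\beta=k(n-1)$, whose only non-negative integer solution is the empty configuration --- a contradiction. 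Analyticity of $b\mapsto\sigma_A(\mathcal{F}_b)$ then shows the bad set of parameters is discrete, producing the required Poincaré (complex) and simple (real) witnesses for every $n\geqslant 2$. Your proposal correctly identifies where the difficulty lies but leaves this ``technical heart'' as a plan rather than a proof; without it, statements $(a)$--$(c)$ do not follow.
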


As anticipated, we also apply our results on Hilbert's 16th problem. To this end, given $\mathcal{X}\in\mathfrak{X}_n(\mathbb{R}^2)$ let $\pi(x)\in\mathbb{Z}_{\geqslant0}\cup\{\infty\}$ denote its number of \emph{limit cycles} (i.e. isolated periodic orbits). In his seminal lecture at the International Congress of Mathematicians in Paris, 1900, David Hilbert presented a list of problems for the $20$th century~\cite{Browder}. The second part of the $16$th problem concerns the study of limit cycles of planar polynomial vector fields. Hilbert asked whether there exists a uniform upper bound on the number of limit cycles for polynomial vector fields of degree $n$. More precisely, for $n\in\mathbb{N}$ let
\[
    \mathcal{H}(n):=\sup\{\pi(\mathcal{X})\colon \mathcal{X}\in\mathfrak{X}_n(\mathbb{R}^2)\}.
\]  
The second part of Hilbert’s $16$th problem can then be stated as the determination of an upper bound for $\mathcal{H}(n)$, a question that remains unsolved. In fact, even for quadratic systems it is still unknown whether $\mathcal{H}(2)<\infty$ or not. Nevertheless, considerable progress has been made in establishing lower bounds for $\mathcal{H}(n)$. For small values of $n$, as far as we know, the best results available are $\mathcal{H}(2)\geqslant 4$~\cites{ChenWang1979,Son1980}, $\mathcal{H}(3)\geqslant 13$~\cite{LiLiuYang2009}, and $\mathcal{H}(4)\geqslant 28$~\cite{ProTor2019}. More generally, it is known that $\mathcal{H}(n)$ grows at least on the order of $O(n^2\ln n)$~\cites{ChrLlo1995}. Regarding \emph{algebraic} limit cycles (i.e. limit cycles contained in invariant algebraic curves), it is know that under generic conditions, the maximum number is $1+(n-1)(n-2)/2$, see~\cites{HilAlg1,HilAlg3}.

In recent years the authors in~\cite{GasSan2025} provided a characterization of the vector fields realizing Hilbert's number $\mathcal{H}(n)$. Let $\Sigma_n\subset\mathfrak{X}_n(\mathbb{R}^2)$ be the set of \emph{structurally stable} vector fields, with hyperbolic limit cycles only. It is known that the Hilbert number is realizable by the elements of $\Sigma_n$.
\begin{theorem}[Theorem~$2$ of~\cite{GasSan2025}]\label{T4}
	For $n\in\mathbb{N}$, the following statements hold.
	\begin{enumerate}[label=(\alph*)]
		\item If $\mathcal{H}(n)<\infty$, then there is $\mathcal{X}\in\Sigma_n$ such that $\pi(\mathcal{X})=\mathcal{H}(n)$.
		\item If $\mathcal{H}(n)=\infty$, then for each $k\in\mathbb{N}$ there is $\mathcal{X}_k\in\Sigma_n$ such that $\pi(\mathcal{X}_k)\geqslant k$.
	\end{enumerate}
\end{theorem}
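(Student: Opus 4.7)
The plan is to derive Theorem~\ref{T4} from two ingredients: the openness and density of $\Sigma_n$ inside $\mathfrak{X}_n(\mathbb{R}^2)$, which follows from the polynomial analogue of Peixoto's theorem applied to the Poincar\'e compactification; and a perturbation lemma asserting that sufficiently small deformations inside $\mathfrak{X}_n(\mathbb{R}^2)$ can only increase the limit cycle count, when each cycle is counted with its multiplicity as a zero of the associated displacement function.

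The core step is the perturbation lemma: if $\mathcal{Y}\in\mathfrak{X}_n(\mathbb{R}^2)$ has limit cycles $\gamma_1,\dots,\gamma_s$ with multiplicities $m_1,\dots,m_s$, then every neighbourhood of $\mathcal{Y}$ in $\mathfrak{X}_n(\mathbb{R}^2)$ contains a vector field with at least $\sum_j m_j$ hyperbolic limit cycles. I would prove it by expanding the displacement function $\delta_j(s,\varepsilon)$ along each $\gamma_j$ as an analytic function of the polynomial coefficients $\varepsilon$, exhibiting a finite family of polynomial deformations whose derivatives at $\varepsilon=0$ span enough Taylor directions to realise a versal unfolding of the zero of order $m_j$, and combining these local unfoldings into a single global polynomial perturbation (which is possible because the cycles are pairwise disjoint compact sets). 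A further arbitrarily small perturbation then places the vector field in the open dense set $\Sigma_n$ without destroying any hyperbolic cycle.

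Granted the lemma, part $(b)$ is immediate: if $\mathcal{H}(n)=\infty$ then for each $k$ one chooses $\mathcal{Y}_k$ with $\pi(\mathcal{Y}_k)\geqslant k$ and applies the lemma. For part $(a)$, since $\pi$ takes values in $\mathbb{Z}_{\geqslant 0}\cup\{\infty\}$, the finiteness of $\mathcal{H}(n)$ implies that the supremum is attained by some $\mathcal{Y}$. If $\mathcal{Y}$ carried a limit cycle of multiplicity at least $2$, the lemma would yield a nearby field with strictly more than $\mathcal{H}(n)$ cycles, contradicting the definition of $\mathcal{H}(n)$. Hence every limit cycle of $\mathcal{Y}$ is already hyperbolic, and a small perturbation into $\Sigma_n$ preserves them by standard persistence, producing $\mathcal{X}\in\Sigma_n$ with $\pi(\mathcal{X})=\mathcal{H}(n)$.

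The principal obstacle is the perturbation lemma itself: one must verify that the finite-dimensional family of degree-$n$ polynomial perturbations is rich enough to versally unfold a multiple limit cycle. This reduces to showing that the partial derivatives of the displacement function with respect to the polynomial coefficients span the required $m_j$ Taylor directions at each $\gamma_j$, which is feasible via analytic dependence of the Poincar\'e return map on parameters but requires a careful algebraic argument to pin down. Everything else (density of $\Sigma_n$, persistence of hyperbolic cycles, localisation near disjoint compact cycles) is standard.
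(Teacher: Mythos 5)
First, a point of calibration: the paper does not prove this statement at all --- it is quoted verbatim as Theorem~2 of \cite{GasSan2025} and used as a black box --- so your proposal can only be measured against the argument in that reference, not against anything internal to this paper.

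Your plan has a genuine gap, and it sits exactly where you flag the ``principal obstacle'': the perturbation lemma asserting that a limit cycle of multiplicity $m$ can be split into $m$ hyperbolic limit cycles by perturbations \emph{inside the finite-dimensional space of degree-$n$ polynomial vector fields}. This is not a careful-but-routine verification of a spanning condition; it is essentially the question of whether the cyclicity of a multiple limit cycle under same-degree polynomial deformations equals its multiplicity. The derivatives of the displacement function with respect to the $(n+1)(n+2)$ coefficients give finitely many Melnikov-type functionals, and there is no general-position or versality argument forcing them to realize $m$ independent jet directions at the cycle; worse, $m$ is not even known to be bounded in terms of $n$, so for large $m$ the required independence is impossible on dimension grounds alone. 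Your deduction in part~$(a)$ inherits this gap: without the full lemma one cannot conclude that a maximizer of $\pi$ has only hyperbolic limit cycles (a cycle of odd multiplicity $\geqslant 3$, for instance, persists under every small perturbation but need not be splittable by any degree-$n$ deformation, so its presence in a maximizer yields no contradiction).

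The fix is that the theorem needs much less. It suffices to show: for every $\mathcal{X}$ with $k$ limit cycles and every neighbourhood of $\mathcal{X}$ in $\mathfrak{X}_n(\mathbb{R}^2)$, there is a structurally stable field in that neighbourhood with \emph{at least} $k$ limit cycles --- no cycle needs to release its full multiplicity. This weaker statement is reachable with a single degree-preserving deformation, the uniform rotation $(P,Q)\mapsto(P-\varepsilon Q,\,Q+\varepsilon P)$, whose first-order effect on every displacement function is strictly signed (the integrand involves $P^2+Q^2>0$ along each cycle). Cycles of odd multiplicity give sign changes of the displacement function that are robust under any small perturbation; each cycle of even multiplicity splits into at least two robust sign changes for one sign of $\varepsilon$ and disappears for the other, and choosing the sign that splits the larger of the two groups keeps the total count $\geqslant k$ (since $2\max(|G_+|,|G_-|)\geqslant |G_+|+|G_-|$). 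The surviving sign changes persist under a final arbitrarily small perturbation into the dense set $\Sigma_n$, which proves both parts. You should replace your versal-unfolding lemma with this counting argument; as written, your proof rests on a claim that is at best open and plausibly false.
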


In this paper we shall not dip into the notion of structural stability of polynomial vector fields. For the interested reader, we refer to~\cite{GasSan2025}. We only remark that $\Sigma_n$ is open and dense in $\mathfrak{X}_n(\mathbb{R}^2)$.

Our last main result is an improving to Theorem~\ref{T4}. Consider $\mathcal{X}\in\mathfrak{X}_n(\mathbb{C}^2)$, and for a singularity $p\in\mathbb{C}^2$, let $\lambda_1,\lambda_2\in\mathbb{C}$ be its associated eigenvalues. We say that $p$ is \emph{simple} if $\lambda_1\lambda_2\neq0$ and $\lambda_1/\lambda_2\not\in\mathbb{Q}_{>0}$. If all singularities of $\mathcal{X}$ are simple, then we say that $\mathcal{X}$ is simple. Let $\mathcal{S}_n$ denote the set of simple vector fields. Let $\Lambda_n:=\Sigma_n\cap\Upsilon_n^\infty(\mathbb{R}^2)\cap\mathcal{S}_n$. We prove that the Hilbert number is realizable by the elements of $\Lambda_n$. 

\begin{main}\label{M2}
	For $n\in\mathbb{N}$, the following statements hold.
	\begin{enumerate}[label=(\alph*)]
		\item If $\mathcal{H}(n)<\infty$, then there is $\mathcal{X}\in\Lambda_n$ such that $\pi(\mathcal{X})=\mathcal{H}(n)$.
		\item If $\mathcal{H}(n)=\infty$, then for each $k\in\mathbb{N}$ there is $\mathcal{X}_k\in\Lambda_n$ such that $\pi(\mathcal{X}_k)\geqslant k$.
	\end{enumerate}
\end{main}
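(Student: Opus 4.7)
The plan is to apply Theorem~\ref{T4} and then perturb the resulting structurally stable vector field into $\Lambda_n$ while preserving the number of limit cycles. The three conditions defining $\Lambda_n$ all behave well under genericity arguments: $\Sigma_n$ is open and dense (as noted in the text preceding Theorem~\ref{M2}), $\Upsilon_n^\infty(\mathbb{R})$ is residual of full measure by Theorem~\ref{M1}(b), and I would verify that $\mathcal{S}_n$ is residual as well. It then follows that $\Lambda_n = \Sigma_n \cap \Upsilon_n^\infty(\mathbb{R}) \cap \mathcal{S}_n$ is residual in $\mathfrak{X}_n(\mathbb{R}^2)$, and of full Lebesgue measure.

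For $\mathcal{S}_n$, I would first observe that having only non-degenerate singularities (i.e. $\lambda_1\lambda_2\neq 0$ at every zero of $(P,Q)$) is an open and dense condition, since its failure is cut out by a proper algebraic subvariety of $\mathfrak{X}_n(\mathbb{R}^2)$. On this open set, the eigenvalues at each singularity depend analytically on the coefficients of $\mathcal{X}$, so for each fixed $r\in\mathbb{Q}_{>0}$ the locus where some singularity has eigenvalue ratio equal to $r$ is a proper analytic subset (non-trivial because explicit examples avoid it). The countable union over $r\in\mathbb{Q}_{>0}$ is meager and of Lebesgue measure zero, and its complement is exactly $\mathcal{S}_n$.

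For part (a), Theorem~\ref{T4} supplies $\mathcal{X}_0\in\Sigma_n$ with $\pi(\mathcal{X}_0)=\mathcal{H}(n)$. Structural stability furnishes an open neighborhood $U$ of $\mathcal{X}_0$ in $\Sigma_n$ on which every vector field is topologically equivalent to $\mathcal{X}_0$, and hence has exactly $\mathcal{H}(n)$ limit cycles. Since $U$ is open in a Baire space and $\Lambda_n$ is residual, $\Lambda_n\cap U$ is dense in $U$, and in particular nonempty; any element of it is the desired vector field. Part (b) is identical, applied separately to each $\mathcal{X}_k\in\Sigma_n$ given by Theorem~\ref{T4}(b).

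The main technical hurdle is the verification that $\mathcal{S}_n$ is residual, which requires some care because the number and location of singularities vary along the family and one must argue uniformly that the countable family of rational-ratio loci are all \emph{proper} subvarieties. Once this is settled, the proof is a clean three-way intersection of generic properties combined with the persistence of hyperbolic limit cycles that is built into structural stability.
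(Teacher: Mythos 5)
Your proposal is correct and follows essentially the same route as the paper, whose proof is precisely the combination of Theorem~\ref{T4}, the openness and density of $\Sigma_n$ (giving a neighborhood in which the limit-cycle count persists), the residuality and full measure of $\Upsilon_n^\infty(\mathbb{R})$ from Theorem~\ref{M1}, and the residuality of the simple vector fields $\mathcal{S}_n$; you merely spell out the Baire-category intersection and the standard argument (non-degeneracy open and dense, then a countable union of proper analytic loci indexed by $\mathbb{Q}_{>0}$) that the paper leaves implicit in the phrase ``the property of simple singularities.''
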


In particular, it follows from Theorem~\ref{M2} that if $\mathcal{X}$ (or $\{\mathcal{X}_k\}_{k\in\mathbb{N}}\}$) attains Hilbert's number, then there is no loss of generality to suppose that \emph{all its limit cycles are hyperbolic and non-algebraic}. Moreover, after considering its \emph{Poincaré compactification}, there is also no loss of generality to suppose that all singularities (including at the infinity) are hyperbolic and simple. 

The paper is organized as follows. Section~\ref{Sec2} contains preliminary material on foliations and systems of differential equations. The content of Lins-Neto work~\cite{ALN1988} is presented in Section~\ref{Sec3}, and its generalization is presented in Section~\ref{Sec4}. The main theorems are proved in Section~\ref{Sec5}, while Section~\ref{Sec6} is devoted to a further thought.

\section{Preliminary section}\label{Sec2}

In this section, we will associate systems of differential equations on $\mathbb{K}^2$ with foliations on the projective plane $\mathbb{K}\mathbb{P}^2$. To this end, we will make use of the concepts of vector fields and $k$-forms on manifolds, and the geometry of projective spaces. For details on these topics, we refer to~\cite{LeeBook}.  

For the purposes of this work, it suffices to view a foliation as phase portraits of analytic systems of differential equations, without the arrows given by the direction of its flow. For a more detailed treatment of the theory and for rigorous proofs of the facts presented here, we refer the interested reader to~\cites{CamALCBook,ALNBook,JouBook}. 

Finally, readers familiar with the notion of holomorphic foliations should be aware we adopted some non-standard conventions, as explained in Remarks ~\ref{Rmk: saturation} and ~\ref{Rmk: saturation 2}.

\subsection{From differential equations to foliations}\label{Sec2.1}

Consider a planar system of differential equations
\begin{equation}\label{1}
    \dot x=P(x,y), \quad \dot y=Q(x,y),
\end{equation}
with $P$, $Q\in\mathbb{K}[x,y]$. From now on we identify system~\eqref{1} with the vector field
\begin{equation}\label{2}
    \mathcal{X}:=P\frac{\partial}{\partial x}+Q\frac{\partial}{\partial y}.
\end{equation}
We say that $\mathcal{X}$ is \emph{saturated} if $\gcd(P,Q)\neq~1$.  Unless explicitly stated, we do not assume that $\mc{X}$ is saturated. The \emph{degree} of $\mc X$ is $\max\{\deg P,\deg Q\}$.

Let $\overline{\mathfrak{X}_n(\mathbb{K}^2)}$ denote the set of planar polynomial vector fields of degree at most $n$. Given $\mathcal{X}=(P,Q)\in\overline{\mathfrak{X}_n(\mathbb{K}^2)}$, let
\[
    P(x,y)=\sum_{i+j\leqslant n}a_{i,j}x^iy^j, \quad Q(x,y)=\sum_{i+j\leqslant n}b_{i,j}x^iy^j,
\]
and consider the isomorphism $\Phi\colon\overline{\mathfrak{X}_n(\mathbb{K}^2)}\to\mathbb{K}^{(n+1)(n+2)}$ given by
\begin{equation}\label{21}
    \Phi(\mathcal{X}):=(a_{0,0},a_{1,0},\dots,a_{1,n-1}a_{0,n},b_{0,0},b_{1,0},\dots,b_{1,n-1}b_{0,n}).
\end{equation}
The \emph{coefficients topology} of $\overline{\mathfrak{X}_n(\mathbb{K}^2)}$ is the topology given by the metric
\begin{equation}\label{15}
    \rho(\mathcal{X},\mathcal{Y}):=||\Phi(\mathcal{X})-\Phi(\mathcal{Y})||,
\end{equation}
where $||\cdot||$ is the standard norm of $\mathbb{K}^{(n+1)(n+2)}$. Remark that $\mathfrak{X}_n(\mathbb{K}^2)$ (i.e. the vector fields of degree $n$) is open and dense in $\overline{\mathfrak{X}_n(\mathbb{K}^2)}$. 

Let $\operatorname{sing}(\mc X)=\{r\in\mathbb{K}^2\colon P(r)=Q(r)=0\}$ be the \emph{singular set} of the vector field $\mc X$. If the foliation is saturated, then the singular set of $\mc{X}$ is finite, and when $\K=\C$ the converse also holds. Notice that $\mc X$ defines a decomposition of $\mathbb{K}^2\setminus\operatorname{sing}(\omega)$ into immersed sub-manifolds of codimension one locally given as solutions of system~\eqref{1}, and which we call the \emph{leaves} of $\mc X$. We refer to this decomposition as the \emph{foliation} $\F$ induced by $\mc X$ on $\K^2$. 

Let $S\subset \K^2$ be an immersed subvariety. We say that $S$ is \emph{invariant} by $\mc X$ if for every $r\in S$ we have $X(r) \in TS_r$. Remark then that not only the leaves are invariant by $\mc X$, but also the singular set of the foliation. In particular, we can have invariant codimension one subvarieties consisting only on singular points of $\mc X$ in case $\mc X$ is not saturated.

To the vector field~\eqref{2} we associate the 1-form 
\begin{equation}\label{3}
    \omega:=Pdy-Qdx.
\end{equation}
Notice that $\omega(\mc X)=0$ and thus, at each point $r\in \K^2$, the kernel of the linear map $\omega: T\K^2_r \rightarrow \K$ contains the vector $(P(r),Q(r))\in \K^2$ (and vice-versa). This allow us to study foliations on $\K^2$ both from the point of view of vector fields and from the point of view of 1-forms. In particular, all the definitions made above works also in the context of foliations defined by 1-forms.

Starting with a foliation $\F \in \overline{\mc X_n(\K^2)}$ defined by $\omega$ as above, it is simple to extend it to a (non necessarily saturated) foliation on $\mathbb{K}\mathbb{P}^2$ having the line at infinity $L_{\infty}$ as an invariant set, as follows. Let $[X:Y:Z]$ be the homogeneous coordinates of $\mathbb{K}\mathbb{P}^2$, and consider its charts $(U_x,\varphi_x)$, $(U_y,\varphi_y)$, $(U_z,\varphi_z)$ given by
\[
\begin{array}{lll}
    \displaystyle U_X=\{[X:Y:Z]\in\mathbb{K}\mathbb{P}^2\colon X=1\}, &\displaystyle \varphi_X([X:Y:Z])=\left(\frac{Y}{X},\frac{Z}{X}\right)=(x_1,y_1), \vspace{0.2cm} \\
    \displaystyle U_Y=\{[X:Y:Z]\in\mathbb{K}\mathbb{P}^2\colon Y=1\}, &\displaystyle \varphi_Y([X:Y:Z])=\left(\frac{X}{Y},\frac{Z}{Y}\right)=(x_2,y_2), \vspace{0.2cm} \\
    \displaystyle U_Z=\{[X:Y:Z]\in\mathbb{K}\mathbb{P}^2\colon Z=1\}, &\displaystyle \varphi_Z([X:Y:Z])=\left(\frac{X}{Z},\frac{Y}{Z}\right)=(x,y).
\end{array}
\]
To extend $\F$, we identify $\mathbb{K}^2=U_Z$ and calculate the expression of $\omega$ on the other charts. Let 
\[
    \varphi_{X,Z}:=\varphi_Z\circ\varphi_X^{-1}\colon U_X\to U_Z, \quad \varphi_{Y,Z}:=\varphi_Z\circ\varphi_Y^{-1}\colon U_Y\to U_Z,
\]
and notice that
\[
    \varphi_{X,Z}(x_1,y_1)=\left(\frac{1}{y_1},\frac{x_1}{y_1}\right), \quad \varphi_{Y,Z}(x_2,y_2)=\left(\frac{x_2}{y_2},\frac{1}{y_2}\right).
\]
Thus, the pullback of $\omega$ by $\varphi_{X,Y}$ is given by
\[
\begin{array}{ll}
   \displaystyle \varphi_{X,Y}^*(\omega) &\displaystyle= P\left(\frac{1}{y_1},\frac{x_1}{y_1}\right)d\left(\frac{x_1}{y_1}\right)-Q\left(\frac{1}{y_1},\frac{x_1}{y_1}\right)d\left(\frac{1}{y_1}\right) \vspace{0.2cm} \\
   &\displaystyle= \frac{1}{y_1^2}\left(Q\left(\frac{1}{y_1},\frac{x_1}{y_1}\right)-x_1P\left(\frac{1}{y_1},\frac{x_1}{y_1}\right)\right)dy_1+\frac{1}{y_1}P\left(\frac{1}{y_1},\frac{x_1}{y_1}\right)dx_1.
\end{array}
\]
Since $P$ and $Q$ are polynomials of degree at most $n$, we have that $\omega_X:=y_1^{n+2}\varphi_{X,Y}^*\omega$ is a polynomial $1$-form defined on $U_X$, and with the same kernel as $\varphi_{X,Y}^*\omega$. More precisely, let $P=P_0+\dots+P_n$, $Q=Q_0+\dots+Q_n$ be the decomposition of $P,Q$ in homogeneous polynomials and consider $L_1,M_1\in\mathbb{K}[x,y]$ given by
\begin{equation}\label{4}
\begin{array}{l}
    \displaystyle L_1(x_1,y_1)=y_1^nP\left(\frac{1}{y_1},\frac{x_1}{y_1}\right) = y_1^nP_0+y_1^{n-1}P_1(1,x_1)+\dots+P_n(1,x_1), \vspace{0.2cm} \\
    \displaystyle M_1(x_1,y_1)=y_1^nQ\left(\frac{1}{y_1},\frac{x_1}{y_1}\right) = y_1^nQ_0+y_1^{n-1}Q_1(1,x_1)+\dots+Q_n(1,x_1).
\end{array}
\end{equation}
From~\eqref{4} we conclude that the extension of $\F$ to $U_X$ is given by kernel of
\begin{equation}\label{5}
    \omega_X=\bigl(M_1(x_1,y_1)-x_1L_1(x_1,y_1)\bigr)dy_1+y_1L_1(x_1,y_1)dx_1.
\end{equation}
Similarly, the extension of $\mathcal{F}$ to $U_Y$ is given by the kernel of
\begin{equation}\label{6}
    \omega_Y=\bigl(x_2M_2(x_2,y_2)-L_2(x_2,y_2)\bigr)dy_2-y_2M_2(x_2,y_2)dx_2,
\end{equation}
with $L_2,M_2\in\mathbb{K}[x,y]$ defined analogously to~\eqref{4}. The triple $\{\omega,\omega_X,\omega_Y\}$ defines a foliation on $\K\Pj^2$, which we refer as the \emph{extension of $\F$ to $\K\Pj^2$}, and we also denote it by $\F$. Notice that the associated vector field of $\omega_X$ (resp. $\omega_Y)$ has the line $y_1=0$ (resp. $y_2=0$) as an invariant set, and thus $L_{\infty}$ is an invariant set of the extension of $\F$ to $\K \Pj^2$.

\begin{remark}\label{Rmk: saturation}
    The foliation $\F$ defined by $\{\omega,\omega_X,\omega_Y\}$ may be \emph{non-saturated}. In this case, our construction above differs from the standard foliation on $\K\Pj^2$ defined from the affine $\omega$, which is the \emph{saturated foliation} constructed from the triple $\{\omega,\omega_X,\omega_Y\}$. We do not saturate $\F$ by two reasons: first, $\omega$ itself may be non-saturated; and second, in this way $\F$ \emph{always} has the line at infinity invariant, even when it consists entirely of singular points. Another consequence of this non-standard definition is that the triple $\{\omega,\omega_X,\omega_Y\}$ do not depend on the degree of $\mc X$, rather it depends on $n$ such that $\mc X \in \overline{\mc X_n(\K^2)}$ and which in general can be greater than $\deg \mc X$. This convention makes the discussion that follows much simpler. 
\end{remark}

\subsection{Homogeneous coordinates of foliations}

A natural way of describing foliations on the projective plane is by means of \emph{homogeneous coordinates}. Let $\F$ be a (non necessarily saturated) foliation on $\K\Pj^2$. Given an affine chart $\K^2\subset \K\Pj^2$, we can restrict $\F$ to $\K^2$ such that the foliation is represented by a polynomial vector field as in~\eqref{2}. We say that $\mathcal{F}$ has \emph{degree} $n$ if the degree of the restriction of $\mathcal{F}$ to a generic affine chart~\eqref{2} is $n+1$.

Let $\pi: \K^3\setminus\{0\}\rightarrow \K\Pj^2$ be the natural projection $\pi(X,Y,Z) = [X:Y:Z]$. Given a foliation $\F$ of degree $n$ on $\K\Pj^2$, we consider then the \emph{pullback} of $\F$ to $\K^3\setminus\{0\}$. It is well known that this foliation is given by an 1-form
\begin{equation}\label{9}
    \Omega=\mathcal{P}dX+\mathcal{Q}dY+\mathcal{R}dZ,
\end{equation}
with $P,Q,R\in\mathbb{K}[X,Y,Z]$ homogeneous of degree $n+1$, and such that it satisfies the \emph{projective condition}
\begin{equation}\label{0}
    \Omega(\mathcal{E}) = 0,
\end{equation}
where $\mathcal{E}$ is the \emph{Euler's vector field}
\begin{equation}\label{10}
    \mathcal{E}=X\frac{\partial}{\partial X}+Y\frac{\partial}{\partial Y}+Z\frac{\partial}{\partial Z},
\end{equation}
defined over $\mathbb{K}^3$. Observe that~\eqref{0} can be written as $X\mathcal{P}+Y\mathcal{Q}+Z\mathcal{R}=0$. Hence, we have
\[
    \big(\mathcal{P},\mathcal{Q},\mathcal{R}\bigr)\perp(X,Y,Z),
\]
for each $(X,Y,Z)\in\mathbb{K}^3\setminus\{0\}$. In turn, this is equivalent to the existence of a third vector $(L,M,N)$, with $L,M,N\in\mathbb{K}[X,Y,Z]$, such that
\[
    \big(\mathcal{P},\mathcal{Q},\mathcal{R}\bigr)=(X,Y,Z)\times(L,M,N),
\]
where $\times$ denotes the usual cross product on $\mathbb{K}^3$. Hence, $\Omega$ is projective if, and only if, there are $L,M,N\in\mathbb{K}[X,Y,Z]$ such that
\begin{equation}\label{11}
    \mathcal{P}=YN-ZM, \quad \mathcal{Q}=ZL-XN, \quad \mathcal{R}=XM-YL.
\end{equation}
Notice that if $\mathcal{P},\mathcal{Q},\mathcal{R}$ are homogeneous of degree $n+1$, then $L,M,N$ are homogeneous of degree $n$. Moreover, $L,M,N$ are not uniquely determined, as they can be replaced by 
\begin{equation}\label{20}
    L_1=L+XH, \quad M_1=M+YH, \quad N_1=N+ZH, 
\end{equation}
with $H\in\mathbb{K}[X,Y,Z]$ homogeneous of degree $n-1$. 

\begin{example}\label{Example: extension of planar vector fields in homogeneous coordinates}
Let $\mc{X} \in \overline{\mc {X}_n(\K^2)}$ be a polynomial vector field~\eqref{2} on $\K^2$ and let us consider the extension of $\mathcal{F}$ to $\mathbb{K}^3\setminus\{0\}$ as described in the preceding section. Let us describe $\F$ in homogeneous coordinates.  We consider its original expression~\eqref{3} on $\mathbb{K}^2=U_Z$ and consider the map
\[
    \varphi_Z\circ\pi(X,Y,Z)=\left(\frac{X}{Z},\frac{Y}{Z}\right)=(x,y).
\]
defined in the open subset $\{Z\neq 0\} \subset \K^3$. Similarly to~\eqref{5} and~\eqref{6}, considering the pullback of $\omega$ and multiplying by $Z^{n+2}$, we obtain the 1-form
\begin{equation}\label{7}
   \Omega = -ZM\,dX+ZL\,dY+(XM-YL)dZ,
\end{equation}
with $L,M\in\mathbb{K}[X,Y,Z]$ given by,
\begin{equation}\label{8}
    L(X,Y,Z)=Z^nP\left(\frac{X}{Z},\frac{X}{Z}\right), \quad M(X,Y,Z)=Z^nQ\left(\frac{X}{Z},\frac{X}{Z}\right).
\end{equation}
Notice that each coefficient of~\eqref{7} is a homogeneous polynomial of degree $n+1$, and since $\Omega$ is not in general saturated, $n$ is not the degree of $\mc X$ (see Remark \ref{Rmk: saturation}).
\end{example}

Conversely, one can verify that any $1$-form $\Omega$ as in~\eqref{9} satisfying the projective condition~\eqref{0} is integrable, and thus defines a foliation on $\K^3$. Moreover, the projective condition also implies that the fibers of the projection $\pi: \K^3\setminus\{0\} \rightarrow \K\Pj^2$ are tangent to the leaves of the foliation defined by $\Omega$. Thus, $\Omega$ also induces a degree $n$ foliation on $\K\Pj^2$. Moreover, two homogeneous 1-forms $\Omega, \Omega'$ define the same foliation on $\K\Pj^2$ if and only if $\Omega = c \cdot \Omega'$ for some $c\in \K\setminus\{0\}$.

The representation of foliations on $\K\Pj^2$ by homogeneous 1-forms on $\K^3$ allows us to study the \emph{space of foliation on $\K\Pj^2$ of degree $n$}, which we denote by $\mathfrak{F}_n(\K)$. Indeed, let us denote by $\Lambda_{n+1}(\mathbb{K})$ the set of $1$-forms on $\mathbb{K}^3$ with coefficients given by homogeneous polynomials of degree $n+1$. Remark that the projective condition~\eqref{0} can be written as a system of linear homogeneous equations on the coefficients of $\mc{P},\mc{Q},\mc{R}$, and therefore the set of 1-forms satisfying~\eqref{0} defines a linear subspace $V_{n+1}(\mathbb{K})\subset\Lambda_{n+1}(\mathbb{K})$. Let $\pi\colon V_{n+1}(\mathbb{K})\setminus\{0\}\to\mathbb{P}(V_{n+1}(\mathbb{K}))$ be the natural projection. From the discussion above, each foliation on $\K\Pj^2$ corresponds to an unique class $[\Omega] \in \mathbb{P}(V_{n+1}(\mathbb{K}))$ and vice-versa. Therefore, we identify $\mathfrak{F}_n(\K)$ with $\Pj(V_{n+1}(\K))$ and endow it with the natural topology of projective spaces.  

\subsection{Foliations with the line at infinity invariant}

Let $\mathfrak{F}_n^\infty(\mathbb{K})\subset\mathfrak{F}_n(\mathbb{K})$ be the set of foliations having the line at infinity $L_\infty$ invariant. Observe that the process of extension of planar vector fields to foliation on $\K\Pj^2$ defined in Section ~\ref{Sec2.1} defines a map
\[
    \Psi\colon\Pj\big(\overline{\mathfrak{X}_n(\K^2)}\big) \rightarrow \mathfrak{F}_n^{\infty}(\mathbb{K}).
\]
Let $\F$ be a degree $n$ foliation on $\K \Pj^2$, represented in homogeneous coordinates by $\Omega$ with coefficients of degree $n+1$. Consider the map $\varphi\colon U_Z\simeq\mathbb{K}^2\to\mathbb{K}^3\setminus\{0\}$ defined by $\varphi(x,y)=(X,Y,1)$, and notice that the pullback of $\Omega$ by $\varphi$ is given by
\[
    \omega=\varphi^*\Omega=\bigl(L(x,y,1)-xN(x,y,1)\bigr)dy-\bigl(M(x,y,1)-yN(x,y,1)\bigr)dx.
\]
To $\omega$ we associate the vector field
\begin{equation}\label{13}
    \mathcal{X}=\bigl(L(x,y,1)-xN(x,y,1)\bigr)\frac{\partial}{\partial x}+\bigl(M(x,y,1)-yN(x,y,1)\bigr)\frac{\partial}{\partial y}.
\end{equation}
Replacing $P(x,y)=L(x,y,1)$, $Q(x,y,1)=M(x,y,1)$, and $R(x,y)=N(x,y,1)$ at~\eqref{13} we obtain
\begin{equation}\label{14}
    \mathcal{X}=(P-xR)\frac{\partial}{\partial x}+(Q-yR)\frac{\partial}{\partial y}.
\end{equation}
Observe that $P,Q,R$ are polynomials (not necessarily homogeneous) of degree at most $n$. Hence, the degree of $\mathcal{X}$ is at most $n+1$. Let $R=\overline{R}+R_n$ be the decomposition of $R$ into a polynomial $\overline{R}$ (not necessarily homogeneous) of degree at most $n-1$, and its homogeneous part $R_n$ of degree $n$.
\begin{proposition}[Lemma~$2$ of~\cite{ALN1988}]\label{Proposition: degree of the planar vector field induced by restriction}
    The following statements are equivalent.
    \begin{enumerate}
        \item $\mathcal{X}$ is of degree at most $n$;
        \item $L_{\infty}$ is invariant;
        \item $R_n=0$.
    \end{enumerate}
\end{proposition}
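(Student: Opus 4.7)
The plan is to establish the chain of equivalences by two independent computations: one purely affine, the other in homogeneous coordinates.

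For $(1)\Leftrightarrow(3)$, I would read the conclusion directly off~\eqref{14} by a degree comparison. Writing $P=\overline{P}+P_n$ and $Q=\overline{Q}+Q_n$ with $\overline{P},\overline{Q}$ of degree at most $n-1$ and $P_n,Q_n$ the homogeneous parts of degree $n$, the coefficient $P-xR=\overline{P}+P_n-x\overline{R}-xR_n$ has its unique degree $(n+1)$ homogeneous component equal to $-xR_n$; analogously the degree $(n+1)$ part of $Q-yR$ is $-yR_n$. Hence $\mathcal{X}$ has degree at most $n$ iff $xR_n=yR_n=0$, which happens exactly when $R_n=0$.

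For $(2)\Leftrightarrow(3)$, I would pass to homogeneous coordinates. Using~\eqref{11}, the foliation is represented by $\Omega=\mathcal{P}\,dX+\mathcal{Q}\,dY+\mathcal{R}\,dZ$ with $\mathcal{P}=YN-ZM$, $\mathcal{Q}=ZL-XN$, $\mathcal{R}=XM-YL$, where $L,M,N\in\mathbb{K}[X,Y,Z]$ are homogeneous of degree $n$. Invariance of $L_\infty=\{Z=0\}$ is equivalent to $dZ\wedge\Omega$ vanishing along $Z=0$, which reduces to the two equations $\mathcal{P}(X,Y,0)=YN(X,Y,0)=0$ and $\mathcal{Q}(X,Y,0)=-XN(X,Y,0)=0$. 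Both amount to $N(X,Y,0)=0$, i.e.\ $Z\mid N$. On the other hand, the affine reduction~\eqref{13} gives $R(x,y)=N(x,y,1)$, and since $N$ is homogeneous of degree $n$ in $(X,Y,Z)$, the top degree homogeneous part of $R$ in $(x,y)$ is precisely $R_n(x,y)=N(x,y,0)$. Thus $R_n=0$ if and only if $N(X,Y,0)\equiv 0$, closing the circle.

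The only subtle point to address is the non-uniqueness of the triple $(L,M,N)$ recorded in~\eqref{20}: any other admissible choice replaces $N$ by $N+ZH$, which leaves $N(X,Y,0)$ unchanged, so the condition $Z\mid N$ is intrinsic to $\mathcal{F}$ and the implication $(2)\Leftrightarrow(3)$ is well-posed. Apart from this bookkeeping, the argument is a direct manipulation of the formulas already displayed in the excerpt, and I do not anticipate any genuine obstacle.
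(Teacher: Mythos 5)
Your argument is correct. The paper itself gives no proof of this proposition, citing it as Lemma~2 of \cite{ALN1988}; your two computations (the degree comparison in~\eqref{14} giving $(1)\Leftrightarrow(3)$, and the identification $R_n(x,y)=N(x,y,0)$ together with $Z\mid\mathcal{P},\mathcal{Q}\iff Z\mid N$ giving $(2)\Leftrightarrow(3)$) are exactly the standard verification, and your remark that the ambiguity~\eqref{20} replaces $N$ by $N+ZH$ without affecting $N(X,Y,0)$ correctly disposes of the only well-posedness issue.
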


From Proposition~\ref{Proposition: degree of the planar vector field induced by restriction}, it follows that $\mathfrak{F}^{\infty}_n(\K)$ is a linear subspace of the space of foliations. In particular, we endow it with the inherited topology. 

\begin{proposition}\label{P1}
    $\Psi$ is a isomorphism.  
\end{proposition}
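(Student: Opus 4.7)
The plan is to construct an explicit two-sided inverse to $\Psi$ by restriction to the affine chart $U_Z$, and to verify that the two compositions are the identity; continuity and bilinearity at the level of the underlying vector spaces will then follow directly from the explicit formulas. First I would note that $\Psi$ is induced by the linear construction from Example~\ref{Example: extension of planar vector fields in homogeneous coordinates}: a vector field $\mathcal{X} = (P,Q) \in \overline{\mathfrak{X}_n(\K^2)}$ produces the homogeneous $1$-form~\eqref{7} with $L,M$ the $n$th-order homogenizations of $P,Q$. This assignment is $\K$-linear in $(P,Q)$ and commutes with scalar multiplication, hence descends to a well-defined morphism of projective spaces, and its image sits inside $\mathfrak{F}_n^\infty(\K)$ since the associated representation in~\eqref{11} has $N=0$.

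For the candidate inverse, given $\mathcal{F} \in \mathfrak{F}_n^\infty(\K)$ represented by a homogeneous $1$-form $\Omega$ with coefficients as in~\eqref{11}, I would set $\Xi(\mathcal{F})$ to be the class of the vector field $\mathcal{X}$ defined by~\eqref{14}. A short check shows $\mathcal{X}$ is independent of the choice of $(L,M,N)$: the substitution~\eqref{20} leaves $P-xR$ and $Q-yR$ unchanged, since the added $XH,YH,ZH$-terms cancel. Proposition~\ref{Proposition: degree of the planar vector field induced by restriction} guarantees that the restricted vector field lies in $\overline{\mathfrak{X}_n(\K^2)}$, and rescaling $\Omega$ rescales $\mathcal{X}$ by the same factor, so $\Xi$ is a well-defined map on projective classes.

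The key verification is then $\Psi \circ \Xi = \mathrm{id}$. Given $\mathcal{F}$ represented by $\Omega$ with $(L,M,N)$ as in~\eqref{11}, invariance of $L_\infty$ together with Proposition~\ref{Proposition: degree of the planar vector field induced by restriction} gives $R_n=0$, i.e.\ $N = Z\cdot H$ for some homogeneous $H$ of degree $n-1$. Using the freedom~\eqref{20} with $-H$ in place of $H$, I may replace $(L,M,N)$ by $(L-XH,\,M-YH,\,0)$ without altering $\Omega$. With this normalization, the vector field~\eqref{14} becomes $\mathcal{X}=(L(x,y,1),M(x,y,1))$, and applying $\Psi$ to it, the homogenizations in~\eqref{8} recover $L$ and $M$ themselves by homogeneity of degree $n$; comparing with~\eqref{7} yields exactly the normalized $\Omega$. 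The reverse composition $\Xi \circ \Psi = \mathrm{id}$ is immediate, since $\Psi$ produces a representation already having $N=0$, and then~\eqref{14} reads off $(P,Q)$ verbatim.

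The main obstacle is the normalization step above, which crucially uses the equivalence in Proposition~\ref{Proposition: degree of the planar vector field induced by restriction} between invariance of $L_\infty$ and the divisibility $Z\mid N$; without this, the restriction map would not land in the correct degree range and $\Psi$ would only be a rational map on projective spaces. Once this is in place, both $\Psi$ and $\Xi$ lift to linear maps between the underlying finite-dimensional vector spaces (the homogenization map $(P,Q)\mapsto(-ZM,ZL,XM-YL)$ in one direction and the dehomogenization $\Omega\mapsto(P-xR,\,Q-yR)$ in the other), so upon projectivizing they are mutually inverse homeomorphisms, establishing that $\Psi$ is an isomorphism of projective spaces.
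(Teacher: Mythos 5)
Your proposal is correct and follows essentially the same route as the paper: the inverse is the restriction $\Omega\mapsto\varphi^*\Omega$ to the affine chart $U_Z$, with Proposition~\ref{Proposition: degree of the planar vector field induced by restriction} guaranteeing that this lands in $\overline{\mathfrak{X}_n(\K^2)}$. You merely carry out explicitly the ``easy calculation'' (the normalization $N=ZH\rightsquigarrow N=0$ via the freedom~\eqref{20}) that the paper leaves to the reader, and you invoke linearity where the paper invokes analyticity; both suffice.
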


\begin{proof} 
    From Example~\ref{Example: extension of planar vector fields in homogeneous coordinates}, it follows that the extension of a planar vector field $\mc{X}$ to a homogeneous 1-form $\Omega$ depends analytically on the coefficients of $\mc{X}$, and therefore $\Psi$ is analytic. Conversely, considering the affine chart $\varphi: U_Z \simeq \K^2 \to \K^3\setminus\{0\}$, by Proposition \ref{Proposition: degree of the planar vector field induced by restriction} we have a well-defined map
    \begin{equation}
        \begin{split}
            \Psi^{-1}: \mathfrak{F}^{\infty}_n(\K) & \rightarrow \Pj\big(\overline{\mathfrak{X}_n(\K^2)}\big) \\
            \Omega & \mapsto \omega = \varphi^*\Omega,
        \end{split}
    \end{equation}
    which also depends analytically on the coefficients of $\Omega$, and thus it is analytic. An easy calculation shows that $\Psi$ and $\Psi^{-1}$ are inverse.  
\end{proof}

Notice from Proposition~\ref{P1} that if $\mathcal{U}\subset\mathfrak{F}_n^\infty(\mathbb{K})$ is dense (resp. open or residual), then $(\Psi\circ\pi)^{-1}(\mathcal{U})\subset\overline{\mathfrak{X}_n(\K^2)}$ is dense (resp. open or residual).

\subsection{Invariant algebraic sets}

In this section, we define the notion of invariant algebraic curves of foliations. We refer to an \emph{algebraic curve} either as a homogenenous polynomial $F\in\mathbb{K}[X,Y,Z]$ or as its associated \emph{algebraic set} $S=\{[r]\in\mathbb{K}\mathbb{P}^2\colon F(r)=0\}$. When $\K=\C$, there is essentially no distinction between these two points of view. However, when $\K=\R$, it is preferable to adopt the polynomial point of view, since an algebraic set of points may correspond to several polynomials.  

Let $\F$ be a foliation on $\K\Pj^2$ of degree $n$. We say that $F\in\mathbb{K}[X,Y,Z]$ is \emph{invariant} by $\mathcal{F}$ if
\begin{equation}\label{19}
    dF\wedge\Omega=F\alpha,
\end{equation}
for some $2$-form $\alpha$ of $\mathbb{K}^3$, with homogeneous coefficients of degree $n$. Furthermore, if $F=0$ is not contained in the singular set of the foliation (i.e. the set $\mathcal{P}=\mathcal{Q}=\mathcal{R}=0$), we say that $F$ is an \emph{algebraic solution} of $\F$.

\begin{remark}\label{Rmk: saturation 2}
    Since we are dealing with possibly non-saturated foliations, we allow invariant curves that consists entirely of singular points for the foliation. For example, given any foliation $\Omega$ and any invariant curve $F$, the (non-saturated) foliation $F\cdot \Omega$ always have the line $F=0$ as an invariant curve. 
\end{remark}

\begin{remark}
    Given a foliation $\F$, if an algebraic curve $F$ is invariant, then its associated algebraic set $S$ is also invariant by the foliation. Conversely, let $S\subset \K\Pj^2$ be an algebraic set invariant by the foliation, and let $F\in \K[X,Y,Z]$ be the homogeneous polynomial of minimal degree defining $S$. Then $F$ is invariant. This relates the \emph{algebraic} notion of invariance with the \emph{geometric} one.    
\end{remark}

Let $\mathcal{X}\in \overline{\mc{X}_n(\K^2)}$ be a vector field with degree at most $n$, and let $f\in \K[x,y]$ be a polynomial on the affine plane. Let $\F \in \mathfrak{F}_n^{\infty}(\K)$ be the corresponding degree $n$ foliation on $\K\Pj^2$ and let $F\in \K[X,Y,Z]$ be the homogenization of $f$. Using~\eqref{19} it is easy to see that $f=0$ is invariant by $\mathcal{X}$ if and only if $F$ is invariant by $\F$. In particular, $\F$ has no invariant algebraic curve (other than the line at infinity) if and only if $\mathcal{X}$ also has none. 
 
 \subsection{Singular set and separatrices}\label{Sec2.3}

The following discussion will take place mainly in the complex setting. Thus, whenever we have a real foliation $\F$, we will instead consider its \emph{complexification} $\F^{\C}$. 

Let $\F$ be a foliation on the projective plane, and as in~\eqref{9}, let $\Omega$ be a homogeneous 1-form representing it. Recall that the \emph{singular set} of $\F$ is given by
\[
    \mathrm{sing}(\F) = \{ [r]\in \C\Pj^2 ; \mc{P}(r)=\mc{Q}(r)=\mc{R}(r)=0\}.
\]
This definition is coherent with the definition for systems of differential equations, since if we consider $\mathcal{F}$ on an affine coordinate, the singularities of $\mathcal{F}$ in this coordinate coincide with the singularities of the respective vector field. It is not hard to verify that $\Omega$ is saturated if, and only if, the set of singularities is isolated. Moreover, it is a well-known fact that the set of saturated foliations is open and dense on $\mathfrak{F}_n(\K)$. 

The notion of \emph{separatrix} is used to describe different objects according to the framework under consideration, namely differential equations or foliations. For the former, we have the celebrated Markus-Neumann theorem, which states that the set of separatrices of a system, in addition to some regular orbits, completely characterizes the phase portrait of the system. For the definition of a separatrix in this framework and the proof of the Markus-Neumann Theorem, we refer to~\cite{Braun}.

In this paper, by \emph{separatrix} we will always refer to its definition according to foliation theory, which is as follows. First, separatrix in this context is a local concept around a singularity, and therefore we present the definition for a germ of foliation $\F$ on $(\C^2,0)$ with an isolated singularity at $0$. Let $B\subset (\C^2,0)$ be a germ of an analytic curve passing by $0$. We say that $B$ is a \emph{separatrix} of $\F$ if $B$ is invariant by the foliation. If $\F$ is defined by the germ of 1-form $\omega$ and $B=\{f=0\}$ for some germ of analytic function $f:(\C^2,0) \rightarrow (\C,0)$, to say that $B$ is a separatrix through $p$ is equivalent to say that there exists a germ of 2-form $\eta$ such that
\begin{equation}
    df \wedge \omega = f \cdot \eta.
\end{equation}
Observe that, in particular, given a foliation $\F$ on $\C\Pj^2$, an invariant curve $F$ is a separatrix throughout all the singularities of $\F$ over $F=0$. 

Let us recall that it is possible to associate a complex number $CS(\F,B)$ to a separatrix $B$ of $\mathcal{F}$ through an isolated singularity $p$, known as the \emph{Camacho-Sad index}. These indices were first introduced at~\cite{CamSad} in order to establish the celebrated Camacho-Sad Theorem, which asserts that through every singularity of a foliation passes a separatrix.  We shall not provide the definition of $CS(\F,B)$ here, since for the purposes of this work we only need to explicitly calculate it for \emph{simple singularities}, as it will be explained in the next section. For more details, see \cite{ALN1988}*{Section 1.3}.

Finally, in this work, we will extensively use the following fact. Let $\F$ be a foliation on $\C\Pj^2$ and let $S$ be an algebraic solution. Let $CS(\F,S)$ be the summation of the Camacho-Sad index of all the separatrices determined by $S$. Then,  
\begin{equation}\label{Eq: Camacho-Sad of a curve}
    CS(\F,S) = 3\deg(S) - \chi(S)+ \sum_{B\in S} \mu(B), 
\end{equation}
where $\chi(S)$ is the Euler characteristic of $S$ and $\mu(B)$ is the Milnor number of the separatrix $B$ on the respective singularity, see~\cite{ALN1988}*{Theorem A}.

\section{Foliations without algebraic solutions}\label{Sec3}

In this section, we present the main results obtained by Lins-Neto in~\cite{ALN1988}, which will be of great use for our goals in this paper. 

\subsection{More on singularities and existence of separatrices}\label{Sec3.1}

For the proceeding discussion, even if $\F$ is a real foliation, we will consider the set of singularities of its complexification $\F^{\C}$.  Let $\mathcal{F}\in\mathfrak{F}_n(\mathbb{\C})$, $p\in\operatorname{sing}(\mathcal{F})$, and $\lambda_1,\lambda_2\in\mathbb{C}$ be its associated eigenvalues in relation to some affine coordinate system. Following~\cite{ALN1988}*{Section~$3.1$}, we say that $p$ is \emph{non-degenerated} if $\lambda_1\neq0$ and $\lambda_2\neq0$. If $p$ is non-degenerated, we say that it is \emph{simple} if $\lambda:=\lambda_2/\lambda_1\not\in\mathbb{Q}_{>0}$. Finally, we say that it is of \emph{Poincaré type} if $\lambda\not\in\mathbb{R}_{>0}$. We remark that even though the eigenvalues $\lambda_1,\lambda_2$ depend on the choice of vector field defining $\F$ at the affine coordinate system, the quotient $\lambda$ does not.

We say $\mathcal{F}\in\mathfrak{F}_n(\mathbb{K})$ is non-degenerated (resp. simple or Poincaré) if all the singularities of $\mathcal{F}^\mathbb{C}$ are non-degenerated (resp. simple or Poincaré). Let
\begin{align*}
     \mathcal{N}_n(\mathbb{K}) &:=\{\mathcal{F}\in\mathfrak{F}_n(\mathbb{K})\colon \mathcal{F} \text{ is non-degenerated}\},  \\
     \mathcal{S}_n(\mathbb{K}) & :=\{\mathcal{F}\in\mathfrak{F}_n(\mathbb{K})\colon \mathcal{F} \text{ is simple}\}, \text{ and }  \\
     \mathcal{P}_n(\mathbb{K}) & :=\{\mathcal{F}\in\mathfrak{F}_n(\mathbb{K})\colon \mathcal{F} \text{ is Poincaré}\}. 
\end{align*}
Observe that $\mathcal{P}_n(\mathbb{K})\subset\mathcal{S}_n(\mathbb{K})\subset\mathcal{N}_n(\mathbb{K})$. If $\mathcal{F}\in\mathcal{N}_n(\mathbb{K})$, then $\mathcal{F}$ has exactly $N=n^2+n+1$ singularities $p_1,\dots,p_N$, see~\cite{ALN1988}*{Lemma~$4$}. For each one of its singularities $p_j$, we define the indices
\begin{equation}\label{18'}
    i_1(p_j,\mathcal{F})=\frac{\lambda_{j,2}}{\lambda_{j,1}}, \quad i_2(p_j,\mathcal{F})=\frac{\lambda_{j,1}}{\lambda_{j,2}}, \quad j\in\{1,\dots,N\},
\end{equation}
If $\mathcal{F}\in\mathcal{S}_n(\mathbb{K})$, then it follows from~\cite{CCD}*{Corollary 3.8} that $\mathcal{F}$ has exactly two separatrices $B_j^1,B_j^2$ through $p_j$, $j\in\{1,\dots,N\}$, tangent to the respective \emph{characteristic directions of $p_j$}, given by the eigenspaces associated with the eigenvalues $\lambda_{j,1},\lambda_{j,2}\in\mathbb{C}\setminus\{0\}$ of $p_j$, respectively. Moreover, in this case, one can calculate that the index defined at equation \eqref{18'} is indeed the \emph{Camacho-Sad index} of the respective separatrices, that is, 
\begin{equation}\label{18}
    CS(B_j^1,\mathcal{F})=\frac{\lambda_{j,2}}{\lambda_{j,1}}, \quad CS(B_j^2,\mathcal{F})=\frac{\lambda_{j,1}}{\lambda_{j,2}}, \quad j\in\{1,\dots,N\},
\end{equation}
see~\cite{ALN1988}*{Section~$3.1$} and~\cite{ALN1998}*{Section~$1$}.  

\subsection{Foliations without algebraic solutions} 

In order to prove the theorems of this section, we will need to consider the \emph{configurations of separatrices} of a given foliation, which we will make a precise definition in what follows. Let $\mathcal{A}$ denote the family of all the proper subsets of $\{(j,k)\in\mathbb{N}^2\colon 1\leqslant j\leqslant N,\, 1\leqslant k\leqslant 2\}$. Given a foliation $\F\in \mc N_n(\K)$, let us fix an enumeration of its singularities and their respective eigenvalues as before. Given a \emph{configuration} $A\in\mathcal{A}$, we can consider the associated \emph{configuration of indices}, which is the set $\{i_k(p_j,\F) \mid (j,k)\in A \}$. We also consider the summation of all these indices, which we denote by
\begin{equation}\label{17}
    \sigma_A(\mathcal{F})=\sum_{(j,k)\in A}i_k(p_j,\mathcal{F}).
\end{equation}
Recall that when $\F$ has only simple singularities (or, at least, the singularities that appear at the configuration $A$ are simple), we know that for each pair $(j,k)$ there exists a unique separatrix $B_{j,k}$ tangent to the characteristic direction of $\lambda_{j,k}$. Hence, in this case, the configuration $A\in \mc A$ could be seen as a \emph{configuration of separatrices}. Also in this case, $\sigma_A(\mathcal{F})$ turns out to be the sum of the Camacho-Sad indices of the respective separatrices.

Let $\F \in \mc S_n(\K)$. Given an irreducible algebraic solution $S$, let $A(S)\in \mc{A}$ be the configuration of the separatrices determined by $S$. In this case, from \eqref{Eq: Camacho-Sad of a curve} it follows that $\sigma_{A(S)}(\mc{F})$ must be a positive integer. As a consequence of this, we have the following.
\begin{theorem}[Theorem~$D$ of~\cite{ALN1988}]\label{T1}
    Let $n\geqslant2$ and $\mathcal{F}\in\mathcal{S}_n(\mathbb{C})$ be such that for any $A\in\mathcal{A}$, the number $\sigma_A(\mathcal{F})$ is not a positive integer. Then $\mathcal{F}$ has no algebraic solution.
\end{theorem}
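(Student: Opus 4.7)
The plan is proof by contradiction. Suppose $\mc F$ admits an algebraic solution $F\in\C[X,Y,Z]$. Since the invariance condition~\eqref{19} is multiplicative in $F$, every irreducible factor of $F$ is itself invariant, so without loss of generality $S=\{F=0\}$ may be taken irreducible of some degree $d\geqslant 1$.

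Now I would apply the facts already established in the discussion preceding the statement. First, $A(S)\in\mc A$. Second, by the Camacho-Sad identity~\eqref{Eq: Camacho-Sad of a curve} together with~\eqref{18},
$$\sigma_{A(S)}(\mc F)\;=\;3d-\chi(S)+\sum_{B\in S}\mu(B)\;\in\;\Z_{>0}.$$
This directly contradicts the hypothesis that $\sigma_A(\mc F)\notin\Z_{>0}$ for every $A\in\mc A$, completing the argument.

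The heart of the matter is therefore not the deduction of Theorem~\ref{T1} itself, but the justification of the two facts invoked above. The positivity and integrality of the Camacho-Sad sum come from identifying it with the topological self-intersection $S\cdot S=d^2$ in $\C\Pj^2$: immediate in the smooth case from $\chi(S)=2-(d-1)(d-2)$, and in general via Milnor-number bookkeeping at the singularities of $S$. The subtler point, which I expect to be the main obstacle, is that $A(S)$ must be a \emph{proper} subset of $\{(j,k):1\leqslant j\leqslant N,\,1\leqslant k\leqslant 2\}$. For this, I would argue by contradiction: if $A(S)$ were the full set, combining the identity above with the Baum-Bott formula $\sum_{p\in\operatorname{sing}(\mc F)}(i_1(p,\mc F)+i_2(p,\mc F)+2)=(n+2)^2$ would force $d^2=(n+2)^2-2N=-n^2+2n+2$, which is negative for $n\geqslant 3$ and equal to $2$ for $n=2$—neither being a positive square.
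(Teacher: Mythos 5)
Your argument is correct and follows essentially the same route as the paper's (which simply invokes Theorem~D of~\cite{ALN1988} after observing that an irreducible algebraic solution $S$ yields a configuration $A(S)\in\mathcal{A}$ with $\sigma_{A(S)}(\mathcal{F})$ a positive integer by~\eqref{Eq: Camacho-Sad of a curve} and~\eqref{18}). The one point you handle more explicitly than the text is the properness of $A(S)$, and your Baum--Bott computation is sound; just note that in the case $n=2$ the value $2$ is a positive integer, so excluding it requires the finer fact that the Camacho--Sad sum of an irreducible solution is never $2$ (immediate since irreducible curves of degree $d\leqslant 2$ are smooth with sum $d^2\in\{1,4\}$, while for $d\geqslant 3$ one has $3d-\chi(S)+\sum_{B}\mu(B)\geqslant 3d-2>2$).
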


In~\cite{ALN1988}, Lins-Neto uses Theorem~\ref{T1} to prove the following theorem.
\begin{theorem}[Theorem~$B$ of~\cite{ALN1988}]\label{T2}
    For all $n\geqslant2$, there exist an open and dense subset $\mathcal{U}\subset\mathfrak{F}_n(\mathbb{C})$ such that if $\mathcal{F}\in\mathcal{U}$, then $\mathcal{F}$ has no algebraic solution.
\end{theorem}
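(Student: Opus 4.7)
The plan is to derive Theorem~\ref{T2} from Theorem~\ref{T1} by showing that the hypotheses of the latter are satisfied on an open and dense subset of $\mathfrak{F}_n(\mathbb{C})$.

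First, I would restrict to the subset $\mathcal{P}_n(\mathbb{C})\subset\mathfrak{F}_n(\mathbb{C})$ of Poincaré foliations. The set $\mathcal{N}_n(\mathbb{C})$ of non-degenerate foliations is open and dense in $\mathfrak{F}_n(\mathbb{C})$; inside it, the Poincaré condition requires each eigenvalue quotient $\lambda_2/\lambda_1\in\mathbb{C}^*$ to lie outside $\mathbb{R}_{>0}$. Since $\mathbb{R}_{>0}$ is closed in $\mathbb{C}^*$ with empty interior and the quotient depends holomorphically on the coefficients of the defining $1$-form, this cuts out an open dense subset of $\mathcal{N}_n(\mathbb{C})$. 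As $\mathcal{P}_n(\mathbb{C})\subset\mathcal{S}_n(\mathbb{C})$, Theorem~\ref{T1} is available: every $\mathcal{F}\in\mathcal{P}_n(\mathbb{C})$ with $\sigma_A(\mathcal{F})\notin\mathbb{Z}_{>0}$ for all $A\in\mathcal{A}$ has no algebraic solution.

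Next, for each of the finitely many $A\in\mathcal{A}$, the function $\sigma_A$ defined in \eqref{17} is holomorphic on $\mathcal{P}_n(\mathbb{C})$, since singularities and eigenvalues depend holomorphically on the coefficients of $\Omega$. Because $\mathbb{Z}_{>0}$ is a closed subset of $\mathbb{C}$, the set
\[
    B_A:=\sigma_A^{-1}(\mathbb{Z}_{>0})=\bigcup_{k\geqslant1}\sigma_A^{-1}(k)
\]
is closed in $\mathcal{P}_n(\mathbb{C})$. Assuming each $\sigma_A$ is non-constant, every fiber $\sigma_A^{-1}(k)$ is a proper analytic subvariety with empty interior; by the Baire category theorem the countable union $B_A$ itself has empty interior, i.e., it is closed and nowhere dense. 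Since $\mathcal{A}$ is finite,
\[
    \mathcal{U}:=\mathcal{P}_n(\mathbb{C})\setminus\bigcup_{A\in\mathcal{A}}B_A
\]
is a finite intersection of open dense subsets of the open dense set $\mathcal{P}_n(\mathbb{C})$, hence is open and dense in $\mathfrak{F}_n(\mathbb{C})$. By Theorem~\ref{T1}, no $\mathcal{F}\in\mathcal{U}$ admits an algebraic solution.

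The main obstacle is verifying the non-constancy of $\sigma_A$ for every proper subset $A\in\mathcal{A}$. A natural route is to argue that each individual ratio $i_k(p_j,\mathcal{F})$ is a non-constant function of $\mathcal{F}$---this follows from the explicit coefficient dependence of the Jacobian at a non-degenerate singularity---and then use a global residue-type identity for the total sum of indices over all singularities to transfer non-constancy from individual ratios to the partial sums corresponding to proper subsets $A$. Alternatively, for each $A$ one can exhibit two explicit foliations (for instance a Jouanolou-type foliation and a suitable perturbation of it) on which $\sigma_A$ takes different complex values.
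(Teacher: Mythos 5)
Your overall reduction (restrict to $\mathcal{P}_n(\mathbb{C})$, apply Theorem~\ref{T1}, show the ``bad'' set $\bigcup_{A\in\mathcal{A}}\sigma_A^{-1}(\mathbb{Z}_{>0})$ is closed with empty interior) matches the skeleton of the argument in~\cite{ALN1988} that the paper summarizes. But there is a genuine gap at exactly the point you flag: you \emph{assume} that each $\sigma_A$ is non-constant (equivalently, that each fiber $\sigma_A^{-1}(k)$ is a proper analytic subset), and this is the whole content of the theorem, not a technical verification. Your first proposed route does not work as stated: non-constancy of the individual indices $i_k(p_j,\mathcal{F})$ does not imply non-constancy of an arbitrary partial sum $\sigma_A$, and no ``residue-type transfer'' principle for proper subsets $A$ is available or sketched. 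Your second route (exhibit a foliation on which $\sigma_A$ avoids the forbidden values) is the correct idea, and it is precisely what the actual proof does: one uses the explicit Jouanolou foliation $\mathcal{F}_n$ (\cite{JouBook}), whose eigenvalue ratios at the $n^2+n+1$ singularities can be computed explicitly, to check that $\sigma_A(\mathcal{F}_n)\notin\mathbb{Z}_{>0}$ for every configuration $A$, i.e.\ that $\mathcal{F}_n$ satisfies no Camacho--Sad obstruction. You do not carry out this computation nor even commit to the example, so the decisive step is missing.

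Two further points you gloss over, both of which the paper's argument needs. First, $\sigma_A$ is only \emph{locally} well defined and analytic, because the enumeration of the singularities $p_1,\dots,p_N$ can only be followed analytically on small open sets and monodromy permutes them; the bad set is well defined globally only because the union over all $A\in\mathcal{A}$ is permutation-invariant. Second, to promote the single witness $\mathcal{F}_n$ to the statement that every local piece $\sigma_A^{-1}(k)$ has empty interior, one uses that $\mathcal{P}_n(\mathbb{C})$ is not only open and dense but \emph{connected} (Lemma~$5$ of~\cite{ALN1988}): if some locally defined $\sigma_A$ were constantly equal to $k\in\mathbb{Z}_{>0}$ on an open set, analytic continuation along a path to $\mathcal{F}_n$ inside the connected set $\mathcal{P}_n(\mathbb{C})$ would force $\sigma_{A'}(\mathcal{F}_n)=k$ for some configuration $A'$, contradicting the Jouanolou computation. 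Your proposal never invokes connectedness, so even granting a witness for one local branch of $\sigma_A$, you could not conclude properness of the fibers throughout $\mathcal{P}_n(\mathbb{C})$. The remaining structural steps in your write-up (closedness of $B_A$, local boundedness of $\sigma_A$ so that only finitely many integers matter locally, Baire, finiteness of $\mathcal{A}$, openness and density of $\mathcal{U}$) are fine.
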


In a few words, to prove Theorem ~\ref{T2}, one must first notice that the set of Poincaré foliations $\mc P_n(\C)$ is open, dense and connected (see~\cite{ALN1988}*{Lemma~$5$}). Then, considering all possible configurations $A\in \mc A$, it is possible to use the locally well-defined and analytic functions $\sigma_A: \F \mapsto \sigma_A(\F)$ to determine an analytic subset $X\subset \mc P_n(\C)$ such that, if $\F \in \mc U := \mc P_n(\C)\setminus X$, then $\F$ has no algebraic solution. Finally, using the existence of the Jouanolou's foliations $\F_n$ (see~\cite{JouBook}*{p~$160$}), one conclude that $X$ has empty interior, that is, $\mc U$ is open and dense. This concludes the proof.

For the case of real foliations, we have the following result.

\begin{theorem}[Theorem~$B'$ of~\cite{ALN1988}]\label{T3}
    For all $n\geqslant2$, $\mc{S}_n(\R)\subset \mathfrak{F}_n(\R)$ is residual. Moreover, there exists a dense and relatively open subset $\mc{U}\subset \mc{S}_n(\R)$ such that if $\F \in \mc{U}$, then $\F$ has no algebraic solution.
\end{theorem}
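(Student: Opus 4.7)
The strategy is to parallel the proof of Theorem~\ref{T2} in the real setting, using the complexification $\F \mapsto \F^\C$ to transfer the Camacho-Sad obstruction from Theorem~\ref{T1} into the real framework. Concretely, I would first show $\mc S_n(\R)$ is residual by expressing its complement as a countable union of closed real-analytic subsets with empty interior, and then define
\[
    \mc U := \{\F \in \mc S_n(\R) \colon \sigma_A(\F^\C) \notin \Z_{>0} \text{ for every } A \in \mc A\},
\]
verifying that $\mc U$ is relatively open and dense. Theorem~\ref{T1} applied to $\F^\C$ will then guarantee that $\F^\C$ admits no complex algebraic solution, and in particular $\F$ has no real algebraic solution.

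For residuality, $\mc N_n(\R) = \mc N_n(\C) \cap \mathfrak{F}_n(\R)$ is open and dense in $\mathfrak{F}_n(\R)$, being the complement of the proper real algebraic set where some eigenvalue vanishes. Within $\mc N_n(\R)$, failure of simplicity occurs exactly when some singularity has eigenvalue ratio in $\Q_{\geq 0}$; for each $q \in \Q_{\geq 0}$ the corresponding set $\Sigma_q$ is closed and real-analytic. Its empty interior follows from the same complex-propagation argument used below, together with the fact that the Jouanolou foliation provides a real Poincaré foliation whose singularity ratios are explicitly irrational. A countable union of meager sets is meager, so $\mc S_n(\R)$ is residual.

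Openness of $\mc U$ is straightforward. Fix $\F_0 \in \mc U$; since $\mc A$ is finite, $\{\sigma_A(\F_0^\C)\}_{A \in \mc A}$ is a finite subset of $\C \setminus \Z_{>0}$, and the discreteness of $\Z_{>0}$ in $\C$ produces a common open neighborhood of these values disjoint from $\Z_{>0}$. After choosing a labeling of the $N = n^2+n+1$ simple singularities in a neighborhood of $\F_0$, each $\sigma_A$ depends continuously (indeed real-analytically) on the coefficients of $\F$, yielding a neighborhood of $\F_0$ in $\mc S_n(\R)$ contained in $\mc U$.

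For density, I would argue by contradiction: suppose some $Y_A := \{\F \in \mc S_n(\R) \colon \sigma_A(\F^\C) \in \Z_{>0}\}$ has nonempty interior. By Baire applied to the decomposition $Y_A = \bigcup_{k \geq 1} \{\sigma_A = k\}$ into closed real-analytic subsets, one of the $\{\sigma_A = k\}$ has nonempty interior $W'$. Since $\mathfrak{F}_n(\R)$ sits as a totally real submanifold of maximal real dimension in $\mathfrak{F}_n(\C)$ and $\sigma_A$ is the restriction of a locally defined holomorphic function on $\mc S_n(\C)$, the vanishing of $\sigma_A - k$ on $W'$ propagates to a complex open neighborhood of $W'$ in $\mc S_n(\C)$. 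This neighborhood meets $\mc P_n(\C)$, which is connected and dense by Lemma~$5$ of~\cite{ALN1988}, so the identity principle forces $\sigma_A \equiv k$ on all of $\mc P_n(\C)$; this contradicts the conclusion of Theorem~\ref{T2} that the set within $\mc P_n(\C)$ on which some $\sigma_A$ takes a positive integer value has empty interior. The main obstacle is the monodromy in the labeling of singularities, which is needed so that $\sigma_A$ genuinely defines a single-valued holomorphic function on the relevant complex neighborhood; this can be resolved either by passing to a finite branched cover of $\mc S_n(\C)$ over which the $N$ singularities are labeled, or by rephrasing the relevant conditions in terms of the elementary symmetric functions of the eigenvalue ratios, which descend to single-valued holomorphic functions on $\mc S_n(\C)$.
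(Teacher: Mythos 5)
Your proposal follows essentially the same route as the paper's: cut out the bad locus by the Camacho--Sad conditions $\sigma_A\in\Z_{>0}$, observe that this locus is the real trace of a complex-analytic set (via the totally real embedding of $\mathfrak{F}_n(\R)$ in $\mathfrak{F}_n(\C)$), and kill its interior using the real Jouanolou example together with the connectedness of $\mc P_n(\C)$; the residuality of $\mc S_n(\R)$ and the handling of the labeling ambiguity (the union over all configurations is permutation-invariant, so the locally defined analytic sets glue) are likewise as in the paper.

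One step needs tightening. In your density argument, $W'$ is only \emph{relatively} open in $\mc S_n(\R)$, and $\mc S_n(\R)$ is residual but not open, so a priori $W'$ contains no open subset of $\mathfrak{F}_n(\R)$; vanishing of the real-analytic function $\sigma_A-k$ on such a set does not by itself propagate to the complexification (a dense $G_\delta$ can have empty interior). The repair uses ingredients you already have: $\sigma_A$ is defined and continuous (indeed analytic) on the \emph{open} set $\mc N_n(\R)$, write $W'=W''\cap\mc S_n(\R)$ with $W''$ open, and use the density of $\mc S_n(\R)$ to conclude $\sigma_A\equiv k$ on the genuinely open set $W''\cap\mc N_n(\R)$ before complexifying. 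This is precisely why the paper insists on regarding the $\sigma_A$ as locally defined analytic functions on an open neighborhood of $\mc S_n(\R)$ inside $\mc N_n(\R)$, rather than on $\mc S_n(\R)$ itself. With that adjustment the argument closes.
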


The proof of Theorem~\ref{T3} is an adaptation of the proof of Theorem~\ref{T2} and we only remark the differences here. The first point is that, in this case, $\mc P_n(\R)$ is not open anymore, and thus we must consider the functions $\sigma_A: \F \rightarrow \sigma_A(\F)$ as locally well-defined and analytic on open neighborhoods of $\mc N_n(\R)$. Considering a small neighborhood $U\subset \mc N_n(\R)$ of $\mc{S}_n(\R)$, the same process allow us to define an analytic subset $X\subset U$ such that, if $\F \in \mc U := \mc S_n(\R)\setminus\mc S_n(\R)\cap X$, then $\F$ has no algebraic solution. We conclude as in Theorem~\ref{T2}. 

\section{Generalization}\label{Sec4}

\subsection{Foliation with a fixed invariant nodal curve}
Recall we say an algebraic curve $F\in \C[x,y,z]$ is \emph{nodal}  if all its singularities are of normal crossing type, i.e., at any singularity of $F$ there are exactly two branches of $F=0$ which intersect transversely. Writing $F=F_1\cdots F_k$ as a product of irreducible factors, this is equivalent to asking that each $F_i$ has only nodal singularities, and that any pair $F_i,F_j$ of distinct curves intersects transversely.

Let $\mathfrak{F}_n^F(\K)\subset \mathfrak{F}_n(\K)$ be the subset of foliations for which the curve $F$ is invariant. Observe that with no further assumptions we could have $\mathfrak{F}_n^F(\K)=\emptyset$ and in this case the discussion that follows would be meaningless. For this reason, we shall always assume that $\mathfrak{F}_n^F(\K)$ is non-trivial.

\begin{proposition}
    Let $F$ be a nodal curve, and let $n\geqslant 1$. Then $\mathfrak{F}_n^F(\K)$ is a linear subvariety of $\mathfrak{F}_n(\K)$.
\end{proposition}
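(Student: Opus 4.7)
The plan is to exhibit $\mathfrak{F}_n^F(\K)$ as the projectivization of an explicit linear subspace of $V_{n+1}(\K)$. Working entirely in homogeneous coordinates, a foliation $\F$ of degree $n$ is represented by a class $[\Omega]\in\Pj(V_{n+1}(\K))$, and by definition $F$ is invariant by $\F$ precisely when there exists a homogeneous $2$-form $\alpha$ of the appropriate degree with $dF\wedge\Omega=F\cdot\alpha$. My aim is to show that the set of $\Omega\in V_{n+1}(\K)$ satisfying this condition is a linear subspace; then $\mathfrak{F}_n^F(\K)$ is its image in $\Pj(V_{n+1}(\K))$ and hence a linear subvariety.

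The first observation is that $\Omega\mapsto dF\wedge\Omega$ is $\K$-linear on $V_{n+1}(\K)$. The second, and main, point is that the existential quantifier on $\alpha$ can be eliminated. Writing
\[
dF\wedge\Omega = A\, dY\wedge dZ + B\, dZ\wedge dX + C\, dX\wedge dY,
\]
with $A,B,C$ homogeneous polynomials of a degree depending only on $n$ and $\deg F$, the existence of a polynomial $2$-form $\alpha$ such that $dF\wedge\Omega=F\cdot\alpha$ is equivalent to $F$ dividing each of $A,B,C$. For a fixed homogeneous polynomial $F$, however, the set of homogeneous polynomials of a given degree $d$ divisible by $F$ is itself a linear subspace, namely the image of the multiplication-by-$F$ map $\K[X,Y,Z]_{d-\deg F}\to\K[X,Y,Z]_d$. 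Hence the three divisibility conditions cut out a linear subspace $W_F\subset V_{n+1}(\K)$, and therefore
\[
\mathfrak{F}_n^F(\K)=\Pj(W_F)\subset\Pj(V_{n+1}(\K))=\mathfrak{F}_n(\K)
\]
is a linear subvariety, as claimed.

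The key (and essentially only) conceptual step is the replacement of the existential quantifier on $\alpha$ by an honest linear test on $\Omega$, which works precisely because divisibility by the fixed polynomial $F$ is a linear condition on the coefficients. The nodal hypothesis on $F$ plays no role in this linearity argument itself; it is stated here because it is the standing hypothesis of the section and will be used in the subsequent results to control the dimension of $W_F$ and to relate it to the Camacho-Sad obstructions, but for the proposition above I would simply carry it along without invoking it.
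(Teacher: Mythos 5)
Your proof is correct and is essentially the paper's argument: the paper simply takes two $1$-forms $\Omega_1,\Omega_2$ with witnesses $\Theta_1,\Theta_2$ for the invariance of $F$ and observes that $\alpha_1\Theta_1+\alpha_2\Theta_2$ witnesses invariance for $\alpha_1\Omega_1+\alpha_2\Omega_2$, which is exactly your observation that divisibility by the fixed $F$ is a linear condition, phrased without eliminating the quantifier. Your remark that the nodal hypothesis is not used here is also consistent with the paper's proof.
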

\begin{proof}
    Let $\F_1,\F_2\in \mathfrak{F}_n^F(\K)$ be foliations given, respectively, by the homogeneous 1-form $\Omega_1,\Omega_2$. Since $F$ is invariant by both foliations, there exists 2-forms $\Theta_1,\Theta_2$ such that $dF \wedge \Omega_i = F \cdot \Theta_i$. Hence, for every constant $\alpha_1,\alpha_2\in \K$, we have that
    \[
    dF \wedge (\alpha_1\cdot \Omega_1 + \alpha_2\cdot \Omega_2) = F \cdot (\alpha_1 \cdot \Theta_1 + \alpha_2\cdot  \Theta_2),
    \]
    that is, the foliation given by $\alpha_1\cdot \Omega_1 + \alpha_2\cdot \Omega_2$ has also $F$ as an invariant curve. Therefore, $\mathfrak{F}_n^F(\K)$ is a linear subspace of $\mathfrak{F}_n(\K)$.
\end{proof}

Following the argument presented at~\cite{ALN1998}, we are able to generalize Theorems~\ref{T2} and~\ref{T3} for this situation. First, let
\[
\begin{split}
    & \mathcal{N}_n^F(\mathbb{K})=\mathcal{N}_n(\mathbb{K})\cap\mathfrak{F}_n^F(\mathbb{K}), \\ \vspace{0.2cm} 
    & \mathcal{S}_n^F(\mathbb{K})=\mathcal{S}_n(\mathbb{K})\cap\mathfrak{F}_n^F(\mathbb{K}),
    \text{ and }\\ \vspace{0.2cm} 
    & \mathcal{\mc P}_n^F(\mathbb{K})=\mathcal{P}_n(\mathbb{K})\cap\mathfrak{F}_n^F(\mathbb{K}),
\end{split}
\]     
be the set of non-degenerated, simple and Poincaré foliations on $\K\Pj^2$ with $F$ invariant. With no further assumptions, these sets may be empty. However, for the remainder of this section, we only consider the case when the appropriate set for the discussion is not empty. 

\begin{example} We are mostly interested in the case when $F = L_1\cdots L_k$ is the product of $k\leqslant n+2$ lines in general position. Considering the maximal case $k=n+2$, for each choice of numbers  $\lambda_1,\ldots, \lambda_{n+1}\in \K$, and $\lambda_{n+2} = - (\lambda_1 +\cdots +\lambda_{n+1})$, we consider the \emph{logarithmic foliation} generated by the homogeneous 1-form
    \[
     \Omega = L_1 \cdots L_{n+2} \cdot\left(\sum_{i=1}^{n+2} \lambda_i\cdot \frac{dL_i}{L_i} \right),
    \]
    Since the eigenvalues of the singularities depend analytically on $\lambda_1,\ldots,\lambda_{n+1}$ and they are not constant, a generic choice of them produces a foliation on the Poincaré domain.
\end{example}

Regarding the singularities of a foliation $\F \in \mc{N}_n^F(\C)$, we have:
\begin{enumerate}[label =(\Roman*)]
    \item\label{Item: cross} singularities that are singularities of the curve $F$. Let us call this \emph{singularities of type \ref{Item: cross}}, and let us denote by $N_{\I}$ the number of singularities of this type;
    \item\label{Item: line} at each $F_i$, there are other $N_{\II}^i$ singularities, where $N_{\II}^i$ is a number that depends only on $F$ and which we can calculate using the \emph{Gómez-Mont-Seade-Verjovsky index} (see~\cite{Brunella97}*{Section~$3$}). We call them \emph{singularities of type \ref{Item: line}};
    \item\label{Item: general} finally, there are $N_{\III}=(n^2+n+1)- N_{\I} - \sum N_{\II}^i$ singularities that does not belong to $F$, which we refer as the \emph{singularities of type \ref{Item: general}}. 
\end{enumerate}
Additionally, if $\F\in \mc{S}_n^F(\C)$ has only simple singularities, we have for singularities of type 
\begin{enumerate}[label =(\Roman*)]
    \item no other separatrix than the branches contained in $F$;
    \item at each singularity $p\in \sing(\F) \cap F_i$, there is one separatrix transverse to $F_i$; 
    \item at each singularity $p\in \sing(\F)\setminus\{F=0\}$, there are two separatrices not contained in $F$. 
\end{enumerate}

If $\mc A$ is the set of all configurations, we define $\mc A^F \subset 2^{\mc A}$ (the power set of $\mathcal{A})$ to be the subset of all configurations excluding the ones corresponding to the separatrices that are local branches of $F=0$. Hence, any configuration $A \in \mc A^F$ can be written as
\[
A = A_{\II}^1 \cup \cdots \cup A^k_{\II} \cup A_{\III}^1 \cup A_{\III}^2,
\]
where 
\begin{enumerate}[label = -]
    \item $A_{\II}^i$ is the configuration corresponding to separatrices of type \ref{Item: line} over points of the curve $F_i$; 
    \item $A_{\III}^2$ is the configuration corresponding to separatrices of singularities $p_j$ of type~\ref{Item: general}, such that 
    \[
    B_j^1 \in A \iff B_j^2 \in A;
    \]
    \item $A_{\III}^1$ is the configuration corresponding to separatrices of singulartities of type~\ref{Item: general}, such that 
    \[
    B_j^1 \in A \Rightarrow  B_j^2 \notin A \text{ and } B_j^2\in A \Rightarrow B_j^1\notin A. 
    \]
\end{enumerate}

As before, given a foliation $\F \in \mc N^F_n(\K)$ and a configuration $A\in \mc A^F$, we associate to it the complex number
\[
\sigma_A(\F)= \sum_{(j,k)\in A} i_k(p_j,\F),
\]
and if $\F \in \mc S^F_n(\K)$, then we can see it instead as the sum of the Camacho-Sad indices over the respective separatrices.

The following lemma is a consequence of~\cite{ALN1988}*{Lemma~$5$}.
\begin{lemma}\label{L: main lemma}
    Let $F$ be a nodal curve in $\K\Pj^2$. Then:
    \begin{enumerate}[label = -]
        \item If $\mc P_n^F(\C)\neq \emptyset$, then $\mc P_n^F(\C)$ and $\mc N^F_n(\C)$ are dense, open and connected subsets of $\mathfrak{F}^F_n(\K)$.        
        \item If $\mc N_n^F(\R)\neq \emptyset$, then $\mc N_n^F(\R)$ is an open and dense subset of $\mathfrak{F}^F_n(\R)$.
        \item If $\mc S^F_n(\R)\neq \emptyset$, then $\mc S^F_n(\R)$ is a residual subset of $\mathfrak{F}^F_n(\R)$.
    \end{enumerate}
    Moreover, given a foliation $\F \in \mc N^F_n(\K)$, there exists an open neighborhood $U\subset \mc N^F_n(\K)$ such that the singularities $p_i(\F)$ depend analytically on $\F$. If in addition $\F \in \mc S_n^F(\K)$ and shrinking $U$ if necessary, then for all configuration $A\in \mc A^F$, the map $\sigma_A: \F' \mapsto \sigma_A(\F')$ is a well-defined analytic function on $\F' \in U$. 
\end{lemma}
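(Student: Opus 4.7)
The plan is to imitate the proof of \cite{ALN1988}*{Lemma~$5$}, tracking throughout the constraint that $F$ remains invariant. I will dispose first of the analytic dependence statement, which is a direct implicit function theorem argument, and then address each of the openness, density and residuality assertions in turn.

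For the final claim, I would fix $\F_0 \in \mc N_n^F(\K)$ and work in an affine chart where $\F_0$ is represented by a polynomial vector field $(P,Q)$. Non-degeneracy of a singularity $p_i$ means that the Jacobian of $(P,Q)$ is invertible at $p_i$, so the implicit function theorem produces a unique analytic family $\F \mapsto p_i(\F)$ on an open neighborhood $U$ of $\F_0$ in $\mc N_n^F(\K)$. The eigenvalues $\lambda_{j,k}(\F)$ of the linearization at $p_j(\F)$ are roots of a quadratic polynomial whose coefficients are analytic in $\F$; when $\F_0 \in \mc S_n^F(\K)$, the two eigenvalues at each singularity are distinct (since their ratio is not $1\in\Q_{>0}$), so shrinking $U$ if necessary the two roots are individually analytic on $U$. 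Consequently each index $i_k(p_j,\F)$ and each $\sigma_A(\F)$ is analytic on $U$ for every $A\in\mc A^F$.

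For the density statements, the ambient $\mathfrak{F}_n^F(\K)$ is a linear subspace of the projective space $\mathfrak{F}_n(\K)$, hence connected (and complex irreducible over $\C$). Non-degeneracy is an open condition whose failure locus is contained in the analytic subvariety where some eigenvalue vanishes. Assuming $\mc N_n^F(\K)\neq\emptyset$, this failure locus is proper; over $\K=\C$ it is then a proper complex analytic set and $\mc N_n^F(\C)$ is open, dense and connected in $\mathfrak{F}_n^F(\C)$, while over $\K=\R$ it is a proper real-analytic set and $\mc N_n^F(\R)$ is open and dense in $\mathfrak{F}_n^F(\R)$. The Poincaré condition $\lambda\notin\R_{>0}$ is open, and its failure in the complex setting is a semi-analytic subset; assuming $\mc P_n^F(\C)\neq\emptyset$ one argues exactly as in \cite{ALN1988}*{Lemma~$5$} (where the connectedness of $\C\setminus\R_{>0}$ is crucial) to conclude that $\mc P_n^F(\C)$ is open, dense and connected in $\mathfrak{F}_n^F(\C)$. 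Finally, simplicity $\lambda\notin\Q_{>0}$ fails on a countable union of proper real-analytic subsets of $\mc N_n^F(\R)$, one for each positive rational, so $\mc S_n^F(\R)$ is residual in $\mc N_n^F(\R)$ and hence in $\mathfrak{F}_n^F(\R)$.

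The main obstacle I anticipate is verifying that the relevant failure loci are \emph{proper} subsets of $\mathfrak{F}_n^F(\K)$, a subtle point since the constraint ``$F$ invariant'' could a priori force some eigenvalue ratios to remain fixed throughout the linear slice. This is precisely where the non-emptiness hypotheses enter: each of $\mc P_n^F(\C)\neq\emptyset$, $\mc N_n^F(\R)\neq\emptyset$, $\mc S_n^F(\R)\neq\emptyset$ supplies a single witness at which the corresponding generic condition holds, and analyticity then propagates properness of the failure locus to the whole of $\mathfrak{F}_n^F(\K)$, after which Lins-Neto's argument runs unchanged.
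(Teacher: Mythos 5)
Your proposal is correct and follows exactly the route the paper intends: the paper offers no proof of this lemma beyond the citation to Lemma~5 of \cite{ALN1988}, and your reconstruction (implicit function theorem for the singularities, local analyticity of the eigenvalue ratios, and the non-emptiness witnesses forcing the algebraic, real-analytic, or semi-analytic failure loci to be proper in the irreducible linear slice $\mathfrak{F}_n^F(\K)$) is the standard adaptation of that lemma. The only point you leave compressed is the connectedness of $\mc P_n^F(\C)$ and the fact that, over $\R$, properness of a failure locus on one component rules it out on all components (which one gets by passing to the complexification, where the slice is irreducible and the real points are Zariski dense); both are handled as in Lins Neto's original argument.
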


Let $\F \in \mc S_n^F(\K)$. Given $S$ an algebraic solution distinct from $F_1,\ldots,F_k$, observe that the configuration corresponding to the separatrices determined by $S$, which we still denote by $A(S)$, belongs to $\mc A^F$. Following as in the proof of~\cite{ALN1998}*{Proposition 2}, which in turn is just a consequence of equation~\eqref{Eq: Camacho-Sad of a curve}, we have the following.
\begin{proposition}\label{P3}
    Let $\F\in \mc S^F_n(\C)$ and let $S$ be an algebraic solution distinct from $F_1,\ldots,F_k$. Then:
    \begin{enumerate}
        \item\label{Item: A(S) is not the full set} $A(S)$ do not determine the full set of separatrices of $\mc A^F$; 
        \item\label{Item: intersects each Li at d points} $\# A(S)_{\II}^i=\deg(S) \cdot \deg(F_i)$ for all $1\leqslant i \leqslant r$; that is, $\big(\# A(S)_{\II}^i\big)/\deg(F_i)$ is a natural number independent of $i$, and 
        \item\label{Item: the equation} For each $1\leqslant i \leqslant r$,
        \[
            \sigma_{A(S)}(\F) = \left(\frac{\#A(S)_{\II}^i}{\deg(F_i)}\right)^2 -  \big(\#A(S)_{\III}^2\big).
        \]  
    \end{enumerate} 
\end{proposition}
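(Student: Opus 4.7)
My plan is to establish the three items in order, relying on two main tools: Bezout's theorem for (2) and the Camacho--Sad formula \eqref{Eq: Camacho-Sad of a curve} for (3). A recurring preliminary fact is that, since $S$ is invariant and $\F$ has smooth leaves outside $\sing(\F)$, every singularity of the curve $S$ must lie in $\sing(\F)$. Part (1) is then a compatibility check: because $S$ is irreducible and distinct from every $F_i$, no local branch of $S$ at a singularity of $\F$ can coincide with a local branch of $F$ (otherwise, by irreducibility, $S$ would share a component with some $F_i$), so no separatrix in $A(S)$ is an $F$-branch and $A(S) \in \mc A^F$.

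For (2), both $S$ and $F_i$ are invariant by $\F$, so their intersection lies in $\sing(\F)$. Moreover, $S$ cannot pass through a type \ref{Item: cross} singularity: at such a simple singularity the only separatrices of $\F$ are the two local branches of $F$ meeting there, so any branch of $S$ through the point would coincide with one of them, again forcing $S$ to contain a component of $F$. Hence every intersection of $S$ with $F_i$ occurs at a type \ref{Item: line} singularity, at which the branch of $S$ is the unique separatrix transverse to $F_i$; each such intersection is therefore transverse and contributes exactly one element to $A(S)_{\II}^i$. By Bezout, $\#A(S)_{\II}^i = \deg(S) \cdot \deg(F_i)$, and hence $\deg(S) = \#A(S)_{\II}^i/\deg(F_i)$ is an integer independent of $i$.

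For (3), equation~\eqref{18} identifies $i_k(p_j,\F)$ with the Camacho--Sad index of the corresponding separatrix, so $\sigma_{A(S)}(\F) = CS(\F,S)$. Substituting into \eqref{Eq: Camacho-Sad of a curve} yields $\sigma_{A(S)}(\F) = 3\deg(S) - \chi(S) + \sum_{B \in S}\mu(B)$. Because $\F$ is simple, each of its singularities admits exactly two separatrices; hence $S$ has at most two branches at any point of $\sing(\F)$, and by the preliminary observation its singular points are exactly those type \ref{Item: general} singularities at which both separatrices of $\F$ lie in $A(S)$. Writing $\delta$ for the number of such points, we have $\#A(S)_{\III}^2 = 2\delta$. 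A direct genus computation for nodal plane curves then reduces the right-hand side of the Camacho--Sad formula to $\deg(S)^2 - 2\delta$, giving $\sigma_{A(S)}(\F) = \deg(S)^2 - \#A(S)_{\III}^2$. Combined with the expression for $\deg(S)$ from (2), this yields the stated identity.

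The main obstacle lies in (3): one must verify that the topological and Milnor contributions in \eqref{Eq: Camacho-Sad of a curve} collapse precisely to $\deg(S)^2 - \#A(S)_{\III}^2$. The simplicity of $\F$ restricts the local singularity types of $S$ at $\sing(\F)$ to be at most nodal, which keeps the bookkeeping manageable; still, aligning the Euler characteristic and Milnor contributions with the convention adopted in \eqref{Eq: Camacho-Sad of a curve} demands some care.
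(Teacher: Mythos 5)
Your treatment of items (2) and (3) follows essentially the same route as the paper (which defers to Lins Neto--Sad--Sc\'ardua, Proposition~2, i.e.\ Bezout plus the index formula \eqref{Eq: Camacho-Sad of a curve}), and it is sound: $S\cap F_i$ consists of transverse intersections at type (II) singularities, giving $\# A(S)_{\II}^i=\deg(S)\deg(F_i)$, and the only singular points of $S$ are nodes at type (III) singularities where both separatrices lie in $S$, giving $\sigma_{A(S)}(\F)=\deg(S)^2-\#A(S)_{\III}^2$. One caveat on (3): the reduction of the right-hand side of \eqref{Eq: Camacho-Sad of a curve} to $\deg(S)^2-2\delta$ holds only if $\chi(S)$ there denotes the Euler characteristic of the \emph{normalization} of $S$; with the topological Euler characteristic of the singular curve one gets $\deg(S)^2-\delta$ instead. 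Equivalently, the sum of the two branch indices at a node differs from the Camacho--Sad index of the curve by the cross term $2(B_1\cdot B_2)_p=2$, and it is these cross terms that produce the factor $2$ in front of $\delta$. You flag this issue and land on the correct formula, so I regard this as a point to be made explicit rather than an error.

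The genuine gap is item (1). What you prove is that no separatrix of $A(S)$ is a local branch of $F$, i.e.\ that $A(S)\in\mc A^F$; but that is the observation the paper records \emph{before} the proposition. Item (1) asserts something stronger: $A(S)$ is a \emph{proper} subset of the set of all separatrices not contained in $F$ (recall that configurations in $\mc A$ are by definition proper subsets, and this properness is exactly what the notion of admissibility records). Properness does not follow from irreducibility of $S$ alone; it needs a counting argument. For instance: if $A(S)$ were the full set, then every singularity of $\F$ would be a node of the reduced curve $C=S\cdot F$ with both separatrices contained in $C$; summing Camacho--Sad indices along $C$ and comparing with the Baum--Bott count $\sum_p\bigl(i_1(p)+i_2(p)+2\bigr)=(n+2)^2$ forces $\deg(S)+\deg(F)=n+2$, while a reduced plane curve of degree $n+2$ carries at most $(n+2)(n+1)/2$ nodes, which is smaller than $n^2+n+1$ for $n\geqslant 2$. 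Some argument of this kind (or the one in the cited Proposition~2) must be supplied; as written, your part (1) establishes a different, weaker statement.
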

A configuration satisfying the properties \eqref{Item: A(S) is not the full set} and \eqref{Item: intersects each Li at d points} above is called \emph{admissible}, and we denote the set of admissible configurations by $\mc A^F_0$. Given an admissible configuration $A\in \mc A^F_0$, we define $k(A) = \big(\# A(S)_{\II}^i\big)/\deg(F_i)$ and $\beta(A) = \# A_{\III}^2$. We use the proposition above in the form of the following lemma.

\begin{lemma}\label{L: no algebraic solution}
    Let $\mathcal{F}\in\mathcal{S}_n^F(\mathbb{C})$ be such that for any admissible configuration $A\in\mathcal{A}^F_0$, $\sigma_A(\mathcal{F}) \neq k(A)^2-\beta(A)$. Then $\mathcal{F}$ has no algebraic solution.
\end{lemma}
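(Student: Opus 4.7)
The plan is a direct contradiction argument, using Proposition~\ref{P3} as the main tool. Since $F = F_1\cdots F_k$ is itself invariant, the statement should be interpreted as: $\mathcal{F}$ has no algebraic solution other than the irreducible components $F_1,\dots,F_k$ of $F$. Suppose for contradiction that $\mathcal{F}$ admits an algebraic solution $S$ distinct from all $F_i$. Writing $S=S_1\cdots S_r$ as a product of irreducible factors, each $S_j$ is itself invariant by $\mathcal{F}$ (since invariance is preserved under taking irreducible factors, as is standard), and at least one of these factors, say $S_1$, is distinct from every $F_i$. After replacing $S$ by $S_1$, we may assume $S$ is irreducible and not equal to any $F_i$.

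Next I apply Proposition~\ref{P3} to this $S$. Since the only separatrices of $\mathcal{F}$ contained in $F$ are, by hypothesis on the type~\ref{Item: cross} and~\ref{Item: line} singularities, the local branches of $F=0$, and $S$ is a new invariant curve distinct from the $F_i$, the configuration $A(S)$ of separatrices determined by $S$ does not contain any branch of $F$; in other words $A(S)\in \mathcal{A}^F$. Moreover, parts~\eqref{Item: A(S) is not the full set} and~\eqref{Item: intersects each Li at d points} of Proposition~\ref{P3} say exactly that $A(S)$ is admissible, so $A(S)\in\mathcal{A}^F_0$, and part~\eqref{Item: the equation} gives
\[
    \sigma_{A(S)}(\mathcal{F}) \;=\; k\bigl(A(S)\bigr)^{2} - \beta\bigl(A(S)\bigr).
\]
This directly contradicts the hypothesis that $\sigma_A(\mathcal{F})\neq k(A)^2-\beta(A)$ for every admissible configuration $A$, completing the proof.

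I do not expect any real obstacle here: the content has been packaged into Proposition~\ref{P3}, so the lemma amounts to the contrapositive plus the reduction to an irreducible factor. The only point deserving a moment of care is the reduction step, namely checking that an algebraic solution decomposes into irreducible invariant pieces and that at least one of them is new with respect to $F$; this is an elementary algebraic fact about equation~\eqref{19} applied to each irreducible factor of $S$.
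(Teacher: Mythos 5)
Your proof is correct and follows exactly the route the paper intends: the lemma is stated as the contrapositive of Proposition~\ref{P3}, and you apply that proposition to a putative algebraic solution $S$ distinct from the components of $F$ to reach the contradiction. The extra care you take in reducing to an irreducible factor of $S$ and in checking that $A(S)\in\mathcal{A}^F$ (distinct irreducible curves cannot share a local branch) is implicit in the paper but correctly handled.
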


We remark that given a vector field $\mathcal{X}_0\in\mathfrak{X}_n(\mathbb{K}^2)$ and its associated foliation $\mathcal{F}_0=\Psi(\mathcal{X}_0)$, we have from Lemma~\ref{L: no algebraic solution} that if $\mathcal{X}_0$ has an invariant algebraic curve $S$, then its associated configuration $A(S)$ is admissible and satisfy $\sigma_A(S)(\mathcal{F}_0)=k^2(S)-\beta(S)$. Therefore, if every admissible configuration of $\mathcal{F}_0$ does not satisfy this last equation, then such $S$ can not exist. Hence, Lemma~\ref{L: no algebraic solution} can be used as a sufficient condition for the non-existence of invariant algebraic curves on vector fields.

\subsection{Foliations without algebraic solution other than a fixed nodal curve}

\begin{theorem}\label{T: lines invariant, complex case}
    Let $F\in \C[X,Y,Z]$ be a nodal curve, and let $n\geqslant 1$. If there exists $\F_0\in \mc P_n^F(\C)$ such that $\sigma_A(\F_0) \neq k(A)^2-\beta(A)$ for all admissible configuration $A\in \mc A^F_0$, then there exist an open and dense subset $\mathcal{U}^F\subset\mathfrak{F}_n^F(\mathbb{C})$ such that every $\mathcal{F}\in\mathcal{U}^F$ has no algebraic solution other than $F$.
\end{theorem}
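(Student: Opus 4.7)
The plan is to adapt the strategy of Theorem~\ref{T2}, with the hypothesised $\F_0$ playing the role played by the Jouanolou foliation in the unconstrained setting, and the ambient space $\mathfrak{F}_n(\C)$ replaced by $\mathfrak{F}_n^F(\C)$. The hypothesis on $\F_0$ is engineered precisely so that its existence rules out a proper analytic subvariety containing \emph{everything}, which is where the Jouanolou example is used in Lins-Neto's proof.

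First I would invoke Lemma~\ref{L: main lemma}: since $\mc{P}_n^F(\C)$ is non-empty by hypothesis, it is an open, dense, and connected subset of $\mathfrak{F}_n^F(\C)$, and around any foliation in $\mc{P}_n^F(\C)$ the singularities and the indices $\sigma_A$ depend analytically on the foliation once a local enumeration of singularities is fixed. Next, for each admissible configuration $A\in\mc A^F_0$, I would consider the locally analytic subvariety
\[
X_A := \{\F\in\mc{P}_n^F(\C) : \sigma_A(\F) = k(A)^2 - \beta(A)\},
\]
and set $Y:=\bigcup_{A\in\mc A^F_0} X_A$. Note that $\mc A^F_0$ is \emph{finite}, since the whole family $\mc A$ consists of proper subsets of a fixed finite set. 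By Lemma~\ref{L: no algebraic solution}, every $\F\in\mc{P}_n^F(\C)\setminus Y$ has no algebraic solution other than the irreducible components of $F$. The hypothesis on $\F_0$ gives that $\F_0\notin X_A$ for every $A\in\mc A^F_0$, so by the identity principle applied on the connected complex manifold $\mc{P}_n^F(\C)$ each $X_A$ is a proper analytic subset with empty interior, as is their finite union $Y$. Hence $\mc{U}^F := \mc{P}_n^F(\C)\setminus Y$ is open and dense in $\mc{P}_n^F(\C)$, and thus also in $\mathfrak{F}_n^F(\C)$.

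The main technical subtlety I expect is that the enumeration of singularities is only locally consistent: transporting the singularities along a non-trivial loop in $\mc{P}_n^F(\C)$ can permute them, so $\sigma_A$ is really a multi-valued analytic function and the naive definition of $X_A$ above makes sense only on a chart. I would address this by redefining $X_A$ as the union of the local loci $\{\sigma_{A'}=k(A')^2-\beta(A')\}$ as $A'$ ranges over the monodromy orbit of $A$. Because the partition of singularities into types~\ref{Item: cross}, \ref{Item: line}, and~\ref{Item: general} is preserved under deformation, this orbit stays inside $\mc A^F_0$, so the hypothesis on $\F_0$ applies to every element of the orbit and the conclusion that $X_A$ has empty interior globalises without change.
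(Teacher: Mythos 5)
Your proposal is correct and follows essentially the same route as the paper: use Lemma~\ref{L: main lemma} to get that $\mc P_n^F(\C)$ is open, dense and connected with locally analytic index functions, define the bad locus locally as the finite union of the level sets $\sigma_A^{-1}(k(A)^2-\beta(A))$ over admissible configurations, observe that this union is independent of the local enumeration of singularities (your monodromy-orbit remark), and conclude from $\F_0\notin X$ and connectedness that $X$ is a closed analytic set with empty interior, so $\mc U^F=\mc P_n^F(\C)\setminus X$ works by Lemma~\ref{L: no algebraic solution}. No gaps.
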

\begin{proof}
    First, let us define an analytic subset $X\subset \mc P^F_n(\C)$ by defining it locally. Let $\F \in \mc P^F_n(\C)$ be a foliation, and let $U\subset \mc P^F_n(\C)$ be a small neighborhood of $\F$. Shrinking $U$ if necessary, for all admissible configuration $A \in \mc A_0^F$, the function
    \begin{equation*}
        \begin{split}
            \sigma_A: U & \rightarrow \C \\
            \F & \mapsto \sigma_A(\F)
        \end{split}
    \end{equation*}
    is well-defined and analytic by Lemma \ref{L: main lemma}. Thus $\sigma_A^{-1}(k(A)^2-\beta(A))$ is an analytic subset of $U$, for each $A \in \mc A^F_0$. Then, in a neighborhood of $\F$, we define the analytic subset
    \[
        X \cap U = \bigcup_{A \in \mc A^F_0} \sigma_A^{-1}\big(k(A)^2-\beta(A)\big),
    \]
    which clearly does not depend on the order we enumerate the singularities of $\F$. Therefore, this determines an analytic subset $X\subset \mc P^F_n(\C)$.
    
    Since $\F_0\notin X$, it follows that $X$ is closed with empty interior, and thus $\mc U = \mc P^F_n(\C)\setminus X$ is an open and dense. Finally, by Lemma \ref{L: no algebraic solution}, if $\F \in \mc U$, then $\F$ has no algebraic solution. This concludes the theorem.  
\end{proof}

A corresponding theorem also holds for real foliations.

\begin{theorem}\label{T6}
    Let $F\in \C[X,Y,Z]$ be a nodal curve and let $n\geqslant 1$. If there exists $\F_0\in \mc S_n^F(\R)$ such that $\sigma_A(\F_0) \neq k(A)^2-\beta(A)$ for all admissible configuration $A\in \mc A^F_0$, then there exist an relatively open and dense subset $\mathcal{U}^F\subset\mc S_n^F(\mathbb{R})$ such that every $\mathcal{F}\in\mathcal{U}^F$ has no algebraic solution other than $F$.
\end{theorem}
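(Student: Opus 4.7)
The plan is to adapt the proof of Theorem~\ref{T: lines invariant, complex case} to the real topology, along the same lines that Theorem~\ref{T3} adapts Theorem~\ref{T2}.

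First, the hypothesis $\F_0\in\mc S_n^F(\R)$ ensures $\mc S_n^F(\R)\neq \emptyset$, so by Lemma~\ref{L: main lemma} the set $\mc S_n^F(\R)$ is residual (in particular dense) in $\mathfrak{F}_n^F(\R)$, while $\mc N_n^F(\R)$ is open and dense. Also, a generic complex perturbation of $\F_0$ moves each of its finitely many eigenvalue ratios off the positive real axis, yielding a foliation in $\mc P_n^F(\C)$; therefore Lemma~\ref{L: main lemma} additionally gives that $\mc N_n^F(\C)$ is a dense, open and connected subset of $\mathfrak{F}_n^F(\C)$, a fact I will need for the analytic continuation step.

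Next, I would define the bad set $X$ locally. Around each $\F\in\mc S_n^F(\R)$ choose a small open neighborhood $U\subset\mc N_n^F(\R)$ on which the $n^2+n+1$ complex singularities of $\F$ can be enumerated analytically, and, shrinking $U$ if needed, on which each $\sigma_A\colon U\to\C$ is well-defined and analytic for every $A\in\mc A^F$ (Lemma~\ref{L: main lemma}). In $U$ set
\[
X\cap U \;=\; \bigcup_{A\in\mc A^F_0}\sigma_A^{-1}\bigl(k(A)^2-\beta(A)\bigr),
\]
a finite union of real-analytic subsets of $U$, each cut out by the two real equations $\mathrm{Re}\,\sigma_A=k(A)^2-\beta(A)$ and $\mathrm{Im}\,\sigma_A=0$. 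Because the union runs over \emph{all} admissible configurations, $X\cap U$ is invariant under relabeling the singularities, so the local definitions glue to a closed real-analytic subset $X$ of an open neighborhood $V\subset\mc N_n^F(\R)$ of $\mc S_n^F(\R)$.

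Finally, set $\mc U^F := \mc S_n^F(\R)\setminus X$. Relative openness is immediate from $X$ being closed. For density, suppose for contradiction that $X$ contained a nonempty relatively open subset $W$ of $\mc S_n^F(\R)$. Since $\mc S_n^F(\R)$ is dense in $V$, $W$ is then open in $V$, so some $\sigma_A-(k(A)^2-\beta(A))$ would vanish on an open subset of the totally real submanifold $\mc N_n^F(\R)\subset\mc N_n^F(\C)$. By the identity principle on the connected complex manifold $\mc N_n^F(\C)$, this forces $\sigma_A\equiv k(A)^2-\beta(A)$ globally, contradicting $\sigma_A(\F_0)\neq k(A)^2-\beta(A)$. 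Thus $\mc U^F$ is relatively open and dense in $\mc S_n^F(\R)$, and by Lemma~\ref{L: no algebraic solution} every $\F\in\mc U^F$ has no algebraic solution other than $F$. The main technical subtlety is precisely this globalization: $\sigma_A$ is only locally single-valued on $\mc N_n^F(\C)$ because the enumeration of singularities is local, but the set $X$ and the property \emph{``$\sigma_A - (k(A)^2-\beta(A))$ vanishes on an open set''} are invariant under the monodromy permuting the singularities, so the identity principle applies once one reformulates the argument on a covering where $\sigma_A$ is single-valued (or equivalently, works with configuration classes).
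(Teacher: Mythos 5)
Your proposal is correct and follows essentially the same route as the paper's proof: define the bad set locally as $\bigcup_{A\in\mc A^F_0}\sigma_A^{-1}\bigl(k(A)^2-\beta(A)\bigr)$ on a neighborhood of $\mc S_n^F(\R)$ in $\mc N_n^F(\R)$, observe that it is the real trace of the complex analytic set $X$ from Theorem~\ref{T: lines invariant, complex case}, and use the identity principle on the connected complex parameter space together with $\F_0\notin X$ to conclude that $X$ has empty interior (your extra remarks on monodromy and on perturbing $\F_0$ into $\mc P_n^F(\C)$ to get connectedness only make explicit what the paper leaves implicit). One small misstatement: a relatively open subset $W=W'\cap\mc S_n^F(\R)$ of the dense set $\mc S_n^F(\R)$ need not be open in $V$; the correct step is that, since $X$ is closed and $\mc S_n^F(\R)$ is dense, $X\supset W$ forces $X\supset\overline{W}\cap V\supset W'$, which is what your identity-principle argument actually requires.
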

\begin{proof}
    By Lemma \ref{L: main lemma}, $\mc S^F_n(\R)$ is residual and $\mc N_n^F(\R)$ is open and dense. Proceeding as in the proof of Theorem~\ref {T: lines invariant, complex case}, we define an analytic variety $X'$ on an open neighborhood $V\subset \mc N_n^F(\R)$ of $\mc S^F_n(\R)$. Regarding $\mathfrak{F}_n^F(\R)$ as the subset of points of $\mathfrak{F}_n^F(\C)$ with real coordinates, remark that $X'$ is just the real part of the analytic variety $X$, and therefore, an interior point $\F \in X'$ must also be an interior point of $X$. 
    
    We define $\mc U = S_n^F(\R)\setminus X\cap \mc S_n^F(\R)$. Since $\mc F_0\notin X$, it follows that $X$ has empty interior, and therefore we conclude the proof of the theorem. 
\end{proof}

Recall that $\mathfrak{F}_n^\infty(\mathbb{K})$ denotes the set of holomorphic foliations of degree $n$ having the line at infinity invariant. Let $\mathcal{N}_n^\infty(\mathbb{R}),\mathcal{S}_n^\infty(\mathbb{R})$, and $\mathcal{P}_n^\infty(\mathbb{R})$ denote the respective subsets of nondegenerate, simple, and Poincaré foliations. As a corollary, we have the following.

\begin{theorem}[\cite{ALN1998}*{Theorem 4}]\label{T7} For all $n\geqslant 2$, there exists an open and dense subset $\mc U^{\infty}$ of $\mathfrak{F}^{\infty}_n(\C)$ (resp. a relatively open and dense subset of $\mc S_n^{\infty}(\R)$) such that for every $\F\in \mc U^{\infty}$, $\F$ has no algebraic solution other than the line at infinity. 
\end{theorem}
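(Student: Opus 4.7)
The plan is to deduce Theorem \ref{T7} from Theorems \ref{T: lines invariant, complex case} and \ref{T6} applied to the homogeneous polynomial $F = Z$ defining the line at infinity $L_\infty$. A single line is smooth, so $F=Z$ is trivially nodal (and remains so taking $L_\infty$ into account, since it is $L_\infty$ itself); moreover, by construction $\mathfrak{F}_n^Z(\mathbb{K}) = \mathfrak{F}_n^\infty(\mathbb{K})$, so the two conclusions of those theorems specialize directly to the two halves of the statement of Theorem \ref{T7}.

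It remains only to verify the hypothesis of those theorems, namely the existence of a seed foliation $\mathcal{F}_0 \in \mathcal{P}_n^\infty(\mathbb{C})$ (resp.\ $\mathcal{F}_0 \in \mathcal{S}_n^\infty(\mathbb{R})$) satisfying $\sigma_A(\mathcal{F}_0) \neq k(A)^2 - \beta(A)$ for every admissible configuration $A \in \mathcal{A}_0^Z$. By the contrapositive of Lemma \ref{L: no algebraic solution}, this non-resonance condition is automatic for any simple foliation whose only invariant algebraic curve is $L_\infty$, and therefore the problem reduces to producing one such foliation, with the required singularity type, in each setting.

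To this end I would use the isomorphism $\Psi$ of Proposition \ref{P1}: a foliation $\Psi(\mathcal{X})$ admits an invariant algebraic curve distinct from $L_\infty$ precisely when the affine polynomial vector field $\mathcal{X}$ itself admits an invariant algebraic curve on $\mathbb{K}^2$. It therefore suffices to pick $\mathcal{X}_0 \in \mathfrak{X}_n(\mathbb{K}^2)$ of degree $n$, admitting no invariant algebraic curve, and whose singularities in $\mathbb{K}^2$ together with those at infinity (in the Poincaré compactification) are all of Poincaré type in the complex case, or simple in the real case. In the complex setting such $\mathcal{X}_0$ is provided by \cite{BerLun}*{Theorem~$5$} or \cite{ALN1998}*{Theorem~$1.1$}, in the spirit of Jouanolou's construction; the real setting is treated by an analogous explicit construction producing a real polynomial vector field whose complexification has no invariant algebraic curve and only simple singularities.

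The principal obstacle, and the only non-formal step, is the production of these seed examples: without a single concrete $\mathcal{X}_0$ admitting no invariant algebraic curve, the analytic subvariety cut out by the equations $\sigma_A = k(A)^2 - \beta(A)$ could a priori cover the whole of $\mathcal{P}_n^\infty(\mathbb{C})$ (resp.\ $\mathcal{S}_n^\infty(\mathbb{R})$), and Theorems \ref{T: lines invariant, complex case} and \ref{T6} would become vacuous in this situation. Once the seeds are in place from the cited literature, everything else is a direct and formal specialization of the previous two theorems.
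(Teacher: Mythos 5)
Your overall strategy (specialize Theorems~\ref{T: lines invariant, complex case} and~\ref{T6} to $F=Z$ and feed them a seed foliation) matches the paper's, but the step by which you produce the seed contains a genuine logical error. You claim that ``by the contrapositive of Lemma~\ref{L: no algebraic solution}'' the non-resonance condition $\sigma_A(\mathcal{F}_0)\neq k(A)^2-\beta(A)$ for all admissible $A$ is automatic for any simple foliation whose only invariant algebraic curve is $L_\infty$. What you are invoking is the \emph{converse} of Lemma~\ref{L: no algebraic solution}, not its contrapositive. The lemma says: if all the Camacho--Sad identities fail, then there is no algebraic solution; its contrapositive says: if there is an algebraic solution, then some identity holds. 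Neither gives what you need, namely that the absence of algebraic solutions forces all identities to fail. The identities $\sigma_A=k(A)^2-\beta(A)$ are only \emph{necessary} conditions attached to a hypothetical curve realizing the configuration $A$; a foliation with no invariant algebraic curve at all can still satisfy one of them accidentally, since a numerical coincidence among eigenvalue ratios does not by itself produce a curve. Consequently the examples you import from \cite{BerLun} or \cite{ALN1998}*{Theorem 1.1}, which are only known to have no algebraic solutions, do not as stated verify the hypothesis of Theorems~\ref{T: lines invariant, complex case} and~\ref{T6}, and the key hypothesis of your argument is left unestablished (in both the complex and the real case).

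The fix is what the paper (tersely) does: exhibit a concrete foliation with $L_\infty$ invariant for which the eigenvalue ratios at all $n^2+n+1$ singularities are computed explicitly and the identities $\sigma_A=k(A)^2-\beta(A)$ are ruled out by a direct arithmetic argument, for instance a $\mathbb{Z}$-linear-independence argument on the indices exactly as in Lemma~\ref{Lemma: Kolmogorov} for $F=xyz$. The paper outsources this computation to the example provided in \cite{ALN1998}*{Theorem 4}. Your proposal correctly identifies the seed as ``the only non-formal step,'' but then disposes of it by an invalid implication rather than by the required computation.
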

\begin{proof}
    The proof follows from the example provided at~\cite{ALN1998}*{Theorem 4}.
\end{proof}

We remark in Theorems~\ref{T6} and~\ref{T7} that as $\mathcal{S}_n^F(\mathbb{R})$ is residual in $\mathfrak{F}_n^F(\mathbb{R})$, the same holds for $\mathcal{U}^F$ and $\mathcal{U}^\infty$.

\subsection{Kolmogorov foliations}

We say that a foliation $\F$ on $\K \Pj^2$ is a \emph{Kolmogorov foliation} if it has the lines $x=0$, $y=0$, and $z=0$ as invariant sets. We denote the set of Kolmogorov foliations of degree $n$ by $\mathfrak{F}_n(\K)^{\rm{Kol}} \subset \mathfrak{F}_n(\K)$.

\begin{lemma}\label{Lemma: Kolmogorov}
    Let $F=xyz$ the product of three lines, $n\geqslant 2$, and $a_0\in \R_{<0}$ such that $\{1,a_0,1/a_0\}$ are $\Z$-linearly independent (for instance, $a_0 = -2^{1/4}$ or $a_0 = \exp(2\pi i/8)$). For all $b\in (\mathbb{D},0)$ (germ of real or complex disk), let $\F_b$ be the foliation defined by the homogeneous 1-form
    \begin{multline*}
        \Omega_b = yz(bx^{n-1}-y^{n-1}+z^{n-1})dx+xz(x^{n-1}-by^{n-1}-a_0z^{n-1})dy \\ + xy(-(b+1)x^{n-1}+(b+1)y^{n-1} + (a_0-1)z^{n-1})dz.
    \end{multline*}
    Then, there is $U\subset \mathbb{D}$, which is open and dense in the complex setting or have full measure on the real setting, such that for all $b\in U$, the foliation $\F_b$ has no admissible configuration $A \in \mc A^F_0$ satisfying $\sigma_A(\F_b) = k(A)^2-\beta(A)$.
\end{lemma}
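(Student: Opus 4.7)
First, I would verify that $F=xyz$ is invariant by $\F_b$ for every $b\in\mathbb{D}$ by direct inspection of $\Omega_b$: each of $\mc P,\mc Q,\mc R$ is divisible by the appropriate pair among $\{x,y,z\}$, so $dF\wedge\Omega_b=F\cdot\alpha_b$ for some homogeneous $2$-form $\alpha_b$, and hence $\F_b\in\mathfrak{F}_n^F(\K)$ throughout $\mathbb{D}$. For each admissible configuration $A\in\mc{A}^F_0$, I would then consider the locally defined analytic function $f_A(b):=\sigma_A(\F_b)-(k(A)^2-\beta(A))$ on the subset $V\subset\mathbb{D}$ where $\F_b$ is simple. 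Since $\mc{A}^F_0$ is finite, once each $f_A$ is known to be non-identically zero, the complement $U$ of the (finitely many) proper analytic zero loci $Z_A$ gives the claimed open-and-dense (resp.\ full-measure) set.

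To show $f_A\not\equiv 0$, I would evaluate at $b=0$. A direct computation on the three affine charts of $\K\Pj^2$ classifies the singularities of $\F_0$: the three corners of $F=0$ are dicritical nodes whose only separatrices lie in $F$ (and hence do not enter any admissible configuration); each coordinate line carries $n-1$ type \II singularities; the interior contains $(n-1)^2$ type \III singularities. All non-corner singularities are simple in a neighborhood of $b=0$, so the Camacho--Sad indices of the non-$F$ separatrices extend analytically to $b=0$, with values $-(n-1)/a_0$ on each transverse separatrix of type \II on $\{x=0\}$; $-(n-1)a_0$ on each such separatrix on $\{y=0\}$; $-(n-1)$ on each on $\{z=0\}$; and the pair $\{a_0^{-1},a_0\}$ at every type \III singularity (whose Jacobian at $b=0$ is diagonal with entries $(n-1)a_0$ and $n-1$).

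For an admissible $A$ with $k:=k(A)$, $\beta:=\beta(A)$, and with $p,q\in\mathbb{Z}_{\geqslant 0}$ counting the separatrices in $A_{\III}^1$ of Camacho--Sad index $a_0^{-1}$ and $a_0$ respectively, summing these contributions gives
\[
f_A(0)=\bigl(\beta-k(n-1+k)\bigr)+\bigl(p+\beta-k(n-1)\bigr)a_0^{-1}+\bigl(q+\beta-k(n-1)\bigr)a_0,
\]
an integral linear combination of $\{1,a_0^{-1},a_0\}$. By the $\mathbb{Z}$-linear independence of this triple, $f_A(0)=0$ forces each coefficient to vanish individually; the first yields $\beta=k(n-1+k)$ and substitution into the second gives $p=-k^2$, possible only when $k=p=q=\beta=0$, i.e.\ $A=\emptyset$, which is excluded from $\mc A^F_0$. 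Hence $f_A(0)\neq 0$ for every admissible $A$, and the conclusion follows. The main technical hurdle is the eigenvalue and separatrix computation at all non-corner singularities across the three charts of $\K\Pj^2$, together with the verification that the $\sigma_A(\F_b)$ extend analytically across $b=0$ in spite of the dicritical corners of $\F_0$---which amounts to checking that the type \II and \III singularities persist and remain simple in a neighborhood of $b=0$.
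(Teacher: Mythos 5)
Your proposal is correct and follows essentially the same route as the paper: classify the singularities of $\F_0$, read off the Camacho--Sad indices at $b=0$, use the $\Z$-linear independence of $\{1,a_0,1/a_0\}$ to force $k=\beta=p=q=0$ and contradict admissibility, then conclude by analyticity of $\sigma_A$ in $b$ and finiteness of $\mc A^F_0$. The only slip is cosmetic: the corner $[0:0:1]$ has eigenvalue ratio $\{a_0,1/a_0\}$ and is simple (not a dicritical node), and at $b=0$ the other two corners merely fail to be simple (ratio $1$); this is harmless since the corners' characteristic directions are branches of $F$ and hence excluded from every admissible configuration.
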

\begin{proof}
    We start describing the singularities of $\Omega_b$. Let $\xi$ be a primitive $(n-1)$-th root of unity, and $a_0^{1/(n-1)}$ a solution of $t^{n-1}=a_0$. Recall that $\Omega_b$ has $n^2+n+1$ singularities, which we distinguish as follows.
    \begin{enumerate}[label = (\Roman*)]
        \item The `corner' points $[0:0:1]$, $[0:1:0]$, and $[1:0:0]$;
        \item The non-corner points in $x=0$, $y=0$, or $z=0$:
        \begin{enumerate}
            \item[$(x)$] $[0:\xi:\xi^i]$, with $1\leqslant i\leqslant n-1$;
            \item[$(y)$] $[\xi \cdot a_0^{1/(n-1)}:0:\xi^i]$, with $1\leqslant i\leqslant n-1$;
            \item[$(z)$] $[\xi:\xi^i:0]$, with $1\leqslant i\leqslant n-1$;
        \end{enumerate}
        \item The points in the affine chart $z=1$, but not in $xy=0$: 
        \[
            \big\{(x,y) \mid 1-y^{n-1}+bx^{n-1}=0 \text{ and } a_0-x^{n-1} + by^{n-1}=0\big\}.
        \]
        For $b=0$ the singularities are $(\xi^i\cdot a_0^{1/(n-1)},\xi^j)$, with $1\leqslant i,j\leqslant n-1$. In particular, for $b\approx0$ all singularities have multiplicity one.
    \end{enumerate}
    Considering their respective Jacobians, we have the following.
    \begin{enumerate}[label = (\Roman*)]
        \item At $[0:0:1]$ the quotients of the eigenvalues are $\{a_0, 1/a_0\}$, and in both $[0:1:0],[1:0:0]$ the quotients are $\{1+b, 1/(1+b)\}$. Then $[0:0:1]$ is simple, while $[0:1:0],[1:0:0]$ are simple if and only if $b\notin \Q$.  For the complex case, the singularities are Poincaré if $b\notin \R$. 
        \item The index on the direction $\ell=0$ and the index on the direction transverse to $\ell=0$, with $\ell\in\{x,y,z\}$, are given respectively by:
        \begin{enumerate}
            \item[$(x)$] $-(a_0+b)/(n-1)$ and $-(n-1)/(a_0+b)$;
            \item[$(y)$] $-(1+a_0b)/(n-1)a_0$ and $-(n-1)a_0/(1+a_0b)$;
            \item[$(z)$] $-(1-b)/[(n-1)(1+b)]$ and $-(n-1)(1+b)/(1-b)$;
        \end{enumerate} 
        Unless an enumerable number of choices for $b$, all these singularities are also simple. For the complex case, it is enough to take $b\notin \R$ small.
        \item For these singularities, if $b=0$ the quotients of eigenvalues are $\{a_0,1/a_0\}$. Therefore, for $b\approx0$, these singularities are simple in the real case and Poincaré in the complex case. 
    \end{enumerate}
    In conclusion, for a small real (resp. complex) disk $\mathbb{D}$, excluding an enumerable quantity of real numbers (resp excluding the real line), there is $U\subset \mathbb{D}$ of full measure (resp open and dense) such that $\F_b\in \mc{S}^F_n(\R)$ (resp $\F_b \in \mc{P}^F_n(\C)$) for all $b\in U$. Moreover, all the indices are well-defined analytic functions (even over non-simple singularities), and thus $\sigma_A: \F \mapsto \sigma_A(\F)$ is a well-defined analytic function on the whole disk. 

    The admissible configurations $A\in \mc{A}^F_0$ are composed by $\alpha$ simple branches, $\beta$ crossed branches and $k$ transverse branches over each line $x=0$, $y=0$, and $z=0$. We now prove that for each admissible configuration $A$, the set of foliations $\F_b$, $b\in (\mathbb{D},0)$ such that $\sigma_A(\F_b) = k(A)^2-\beta(A)$ is discrete. 

    Aiming a contradiction, suppose there exists an admissible configuration $A\in \mc{A}^F_0$ such that the set of foliations $\F_b$ such that $\sigma_A(\F_b) = k(A)^2-\beta(A)$ is not discrete. Since $\sigma_A$ is analytic, it implies $\sigma_A(\F_b) = k(A)^2-\beta(A)$ for all $b\in (\mathbb{D},0)$, and in particular $\sigma_A(\F_0) = k(A)^2-\beta(A)$. Considering the simple branches of $A$ (associated with singularities of type $\III$), suppose it has $\alpha_1$ characteristic directions corresponding to $a_0$ and other $\alpha_2$ characteristic directions corresponding to $1/a_0$. From the Jacobians at singularities of type $\II$ and $\III$ we conclude that $\sigma_A(\mathcal{F}_0)=k^2-\beta$ is equivalent to
    \[
        k^2-\beta =  \alpha_1\cdot a_0 + \alpha_2 \cdot \frac{1}{a_0} + \beta \cdot \left(a_0+\frac{1}{a_0}\right) + k\cdot \left(-(n-1)a_0 -\frac{n-1}{a_0} - (n-1)  \right).
    \]
    Since $\{1,a_0,1/a_0\}$ are linearly independent over $\mathbb{Z}$, this expression implies 
    \[
    \begin{split}
        k^2-\beta & = -k(n-1), \\
        0 & = \alpha_1 + \beta - k(n-1), \text{ and } \\
        0 & = \alpha_2 + \beta - k(n-1). 
    \end{split}
    \]
    From the first equation, we have $\beta-k(n-1) = k^2$. Replacing it on the second and third equations, we obtain $\alpha_1=\alpha_2 = - k^2$. The only non-negative integer solution is therefore $\alpha_1=\alpha_2=k=\beta=0$, and thus $A$ is not an admissible configuration, leading to a contradiction. 

    Therefore, for each admissible configuration $A\in \mc{A}^F_0$, there is only a discrete (and thus finite) set of $b\in (\mathbb{D},0)$ satisfying $\sigma_A(\F_b) = k(A)^2-\beta(A)$. This concludes the proof of the lemma.
\end{proof}

From the Lemma \ref{Lemma: Kolmogorov}, and Theorems \ref{T: lines invariant, complex case} and \ref{T6}, we conclude the following:

\begin{theorem}\label{T: Kolmogorv foliations}
    Let $\mathfrak{F}_n(\K)^{\rm{Kol}}$ be the space of Kolmogorov foliations at $\K\Pj^2$. Then, for each $n\geqslant2$, there exists an open and dense subset $U\subset \mathfrak{F}_n(\C)^{\rm{Kol}}$ (resp. a residual subset  $U\subset\mathfrak{F}_n(\R)^{\rm{Kol}}$) such that every $\F\in U$ has no algebraic solution other than $xyz=0$.
\end{theorem}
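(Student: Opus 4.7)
The plan is to deduce this theorem as a direct application of Theorems~\ref{T: lines invariant, complex case} and~\ref{T6} to the nodal curve $F = xyz$, using Lemma~\ref{Lemma: Kolmogorov} to supply the required distinguished foliation. First, I would observe that a Kolmogorov foliation is precisely a foliation having $F = xyz$ invariant, so $\mathfrak{F}_n(\K)^{\rm{Kol}} = \mathfrak{F}_n^F(\K)$. The curve $F$ is nodal, since its three irreducible factors $x$, $y$, $z$ are pairwise transverse lines meeting only at the three coordinate vertices, where exactly two smooth branches cross transversely.

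Second, I would invoke Lemma~\ref{Lemma: Kolmogorov} to produce the required foliation $\F_0$ appearing in the hypothesis of Theorems~\ref{T: lines invariant, complex case} and~\ref{T6}. In the complex setting, I would pick any $b$ in the open and dense subset $U \subset \mathbb{D}$ with $b \notin \R$ sufficiently small, ensuring $\F_b \in \mc{P}_n^F(\C)$ and $\sigma_A(\F_b) \neq k(A)^2 - \beta(A)$ for every admissible configuration $A \in \mc A^F_0$; the hypothesis of Theorem~\ref{T: lines invariant, complex case} is then fulfilled. In the real setting, I would similarly choose $b$ in the full-measure subset $U \subset \mathbb{D}$ (real) small enough so that $\F_b \in \mc{S}_n^F(\R)$, fulfilling the hypothesis of Theorem~\ref{T6}.

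Third, I would simply apply these two theorems to conclude. Theorem~\ref{T: lines invariant, complex case} immediately yields an open and dense subset $\mc{U} \subset \mathfrak{F}_n^F(\C) = \mathfrak{F}_n(\C)^{\rm{Kol}}$ of foliations with no algebraic solution other than $F$. In the real case, Theorem~\ref{T6} produces a relatively open and dense subset $\mc{U} \subset \mc{S}_n^F(\R)$; since $\mc{S}_n^F(\R)$ is residual in $\mathfrak{F}_n^F(\R)$ by Lemma~\ref{L: main lemma}, the intersection is residual in $\mathfrak{F}_n(\R)^{\rm{Kol}}$, as required.

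The main obstacle in the proof chain was already overcome in Lemma~\ref{Lemma: Kolmogorov}, where the $\Z$-linear independence of $\{1, a_0, 1/a_0\}$ is exploited to force any admissible configuration achieving the equality $\sigma_A(\F_b) = k(A)^2 - \beta(A)$ to satisfy $\alpha_1 = \alpha_2 = k = \beta = 0$, contradicting admissibility. Granting that lemma, the present theorem is essentially an unpacking of definitions followed by citation; the only care needed is the identification $\mathfrak{F}_n^{xyz}(\K) = \mathfrak{F}_n(\K)^{\rm{Kol}}$ and the verification that the analytic/density statements transfer correctly between $\mc{S}_n^F(\R)$ and the ambient space $\mathfrak{F}_n(\R)^{\rm{Kol}}$.
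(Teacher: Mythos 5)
Your proposal is correct and follows exactly the paper's route: the paper proves this theorem by citing Lemma~\ref{Lemma: Kolmogorov} together with Theorems~\ref{T: lines invariant, complex case} and~\ref{T6}, which is precisely the chain you spell out (including the identification $\mathfrak{F}_n(\K)^{\rm{Kol}}=\mathfrak{F}_n^{xyz}(\K)$ and the passage from relative openness in $\mathcal{S}_n^F(\R)$ to residuality via Lemma~\ref{L: main lemma}). No gaps.
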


\section{Proof of the main theorems}\label{Sec5}

We recall that a real subset $A\subset\mathbb{R}^M$ is \emph{semi-algebraic} if it is the finite union of sets of form
\[
    \{x\in\mathbb{R}^M\colon f_1(x)=\dots=f_\ell(x)=0, \, g_1(x)>0, \dots, g_r(x)>0\},
\]
with $f_1,\dots,f_\ell,g_1,\dots,g_r\in\mathbb{R}[x]$, see~\cite{Algebric}*{Proposition~$2.1.8$}.

\begin{proof}[Proof of Theorem~\ref{M3}]
    Let $\overline{\mathfrak{X}^F_n(\mathbb{R}^2)}$ denote the affine space of vector fields of degree at most $n$ with $F$ invariant. Statements~\ref{a} and~\ref{b} follows from Proposition~\ref{P1}, Lemma~\ref{L: main lemma},  Theorems~\ref{T: lines invariant, complex case} and~\ref{T6}, together with the fact that $\overline{\mathfrak{X}^F_n(\mathbb{R}^2)}-\mathfrak{X}_n^F(\mathbb{K}^2)$ is a linear subspace of $\overline{\mathfrak{X}^F_n(\mathbb{R}^2)}$. Let us focus now on statement~\ref{c}. Let $\mathbb{R}^{N_1}$ and $\mathbb{R}^{N_2}$ be the spaces of polynomials $K\in\mathbb{R}[x,y]$, $F\in\mathbb{R}[x,y]$ of degree at most $n-1$, and $d$, respectively. Consider the space $Y_{n,d}=\overline{\mathfrak{X}^F_n(\mathbb{R}^2)}\times\R^{N_1}\times\R^{N_2}$ of triples $(\mathcal{X},K,F)$. Notice that $\mathcal{X}=(P,Q)$ has $G\neq F$ as an invariant algebraic curve with cofactor $K$ if, and only if,
    \begin{equation}\label{23}
        P\frac{\partial G}{\partial x}+Q\frac{\partial G}{\partial y}=KG, \quad G\neq F.
    \end{equation}
    Let $\Gamma_{n,d}\subset Y_{n,d}$ be the real semi-algebraic set defined by~\eqref{23}. Consider the projection $\Pi\colon Y_{n,d}\to\overline{\mathfrak{X}^F_n(\mathbb{R}^2)}$ given by $\Pi(\mathcal{X},K,F)=\mathcal{X}$, and let $A_{n,d}=\Pi(\Gamma_{n,d})$. Remark that $A_{n,d}$ is the set of vector fields with an invariant algebraic curve $G\neq F$ of degree at most $d$.  
    
    From \emph{Tarski–Seidenberg Theorem}~\cite{Algebric}*{Theorem~$2.2.1$} we have that $A_{n,d}$ is semi-algebraic in $\overline{\mathfrak{X}_n^F(\mathbb{R}^2)}\simeq\mathbb{R}^{N}$. Moreover, by ~\cite{Algebric}*{Theorem~$2.4.5$}, it can be decomposed as a finite union of semi-algebraic connected sets. Observe that none of these connected components has dimension $N$, otherwise $\overline{\mathfrak{X}_n^F(\R^2)}$ would have an open subset of vector fields containing an invariant curve other than $F$, which contracts ~\ref{b}. Therefore, the \emph{dimension} of $A_{n,d}$ is smaller than $N$.

    Observe now that the closure of $A_{n,d}$ is also semi-algebraic and of the same dimension~\cite{Algebric}*{Propositions~$2.2.2$, and~$2.8.2$} of $A_{n,d}$. Thus, $\overline{A_{n,d}}$ is closed and with empty interior. In particular, its complement is open, dense, and $(\overline{\mathfrak{X}_n^F(\R^2)}\setminus\overline{A_{n,d}}) \cap  \mathfrak{X}^F_n(\R^2) \subset  \Upsilon_n^{d,F}(\mathbb{R})$. This proves statement~\ref{c}. 
    
    Finally, the full measure of $\Upsilon_n^{\infty,F}(\mathbb{R}),\Upsilon_n^{d,F}(\mathbb{R})$ follows from the fact that each $\overline{A_{n,d}}$ has zero Lebesgue measure, while the full measure of $\Upsilon_n^{\infty,F}(\mathbb{C}),\Upsilon_n^{d,F}(\mathbb{C})$ follows by identifying $\overline{\mathfrak{X}_n^F(\mathbb{C}^2)}\approx\mathbb{C}^N\approx\mathbb{R}^{2N}$ and proceeding similarly.
\end{proof}

\begin{proof}[Proof of Theorem~\ref{M1}]
    It follows from Theorem~\ref{M3} and Theorem~\ref{T7}.
\end{proof}

\begin{proof}[Proof of Theorem~\ref{M4}]
    It follows from Theorem ~\ref{M3} and Theorem~\ref{T: Kolmogorv foliations}. 
\end{proof}

\begin{proof}[Proof of Theorem~\ref{M2}]
    It follows from Theorem~\ref{M1} (in addition with the property of simple singularities), Theorem~\ref{T4}, and the fact that $\Sigma_n$ is open and dense.
\end{proof}

\section{Further thoughts}\label{Sec6}

Since $\Upsilon_n^\infty(\mathbb{C})$ is open and dense, we have in the complex case that the family of vector fields with some invariant algebraic curve is not dense. The same property also holds in relation to $\Upsilon_n^d(\mathbb{R})$, for each $d\in\mathbb{N}$. On the other hand, the fact that $\Upsilon_n^\infty(\mathbb{R})$ is residual and of full measure does not prevent the set of real vector fields with at least one invariant algebraic curve to be dense. This rises a natural problem.

\begin{problem}
    Does $\Upsilon_n^\infty(\mathbb{R})$ contain an open and dense set? When does $\Upsilon_n^{\infty,F}(\R)$ contain an open and dense set? 
\end{problem}

\section*{Acknowledgments}

This work is supported by São Paulo Research Foundation, grants 2021/01799-9, and 2024/15612-6.

\end{document}